\theoremstyle{plain}
\newtheorem{thm}{\bf Theorem}[section]
\newtheorem{prop}[thm]{\bf Proposition}
\newtheorem{lemma}[thm]{\bf Lemma}
\newtheorem{corollary}[thm]{\bf Corollary}
\newtheorem{conjecture}[thm]{\bf Conjecture}
\theoremstyle{definition}
\newtheorem{definition}[thm]{\bf Definition}
\theoremstyle{remark}
\newtheorem{remark}[thm]{\bf Remark}
\newtheorem{example}[thm]{\bf Example}
\theoremstyle{example}
\numberwithin{equation}{section}
\def \Tor{{\operatorname{Tor}}}
\def \LT{{\operatorname{LT}_{\prec}}}
\def \Gens{{\operatorname{Gens}}}
\def \col{{\operatorname{col}}}
\def \link{{\operatorname{link}}}
\def \lk{{\operatorname{link}}}
\def \lk{{\operatorname{link}}}
\def \G{{\Gamma}}
\def \NN{\mathbb N}
\def\D{\Delta}
\begin{document}
\title{ On the  $h$-vectors of Cohen-Macaulay Flag Complexes}
\author{Alexandru Constantinescu \\
\footnotesize Dipartimento di Matematica\\
\footnotesize Univ. degli Studi di Genova, Italy\\
\footnotesize \url{constant@dima.unige.it}
\and \setcounter{footnote}{3} Matteo Varbaro \\
\footnotesize Dipartimento di Matematica\\
\footnotesize Univ. degli Studi di Genova, Italy\\
\footnotesize \url{varbaro@dima.unige.it}}
\date{{\small \today}} 

\maketitle
\abstract{ 
Starting from an unpublished conjecture of Kalai and from a conjecture of Eisenbud, Green and Harris, we study several problems relating $h$-vectors of Cohen-Macaulay, flag simplicial complexes and face vectors of simplicial complexes. 
}
\section{Introduction}

The $f$-vectors of simplicial complexes and the $h$-vectors of standard graded $K$-algebras are fascinating subjects in combinatorics and  commutative algebra. These topics have been the object of study for many researchers in the past decades (for instance see \cite{BFS,FFK,EGH,Fr}). The $f$-vectors of simplicial complexes have been completely characterized by Kruskal and Katona, and the $h$-vectors of Cohen-Macaulay standard graded $K$-algebras have been characterized by Macaulay. However, many questions regarding both $f$- and $h$-vectors  remain open, when extra properties are assumed for the simplicial complex, respectively for the standard graded algebra.

 
An unpublished conjecture of Kalai stated that for any flag simplicial complex there exists a balanced simplicial complex with the same $f$-vector. 
This fact has been recently  proven by Frohmader in \cite{Fr}. 
This conjecture of Kalai has also a second part which is still open, namely:
\begin{conjecture}[Kalai]\label{kalai} The following inclusion holds true:
\begin{equation}
\!\! \left \{ \! \!
	\begin{array}{c}
	\hbox{$f$-vectors of Cohen-Macaulay,} \\
	\hbox{ flag simplicial complexes}
	\end{array}
\right \}
\subseteq
\left \{ 
	\begin{array}{c}
	\hbox{$f$-vectors of Cohen-Macaulay,}  \\
 	\hbox{ balanced simplicial complexes}
	\end{array}
\!\!\!\right \}. \nonumber
\end{equation}
\end{conjecture}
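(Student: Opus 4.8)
\medskip
\noindent\emph{A possible approach to Conjecture~\ref{kalai}.} The plan is to translate the conjecture into a statement about Hilbert functions and then attack it through the Eisenbud--Green--Harris circle of ideas. For a Cohen--Macaulay complex the $f$-vector and the $h$-vector carry the same information once the dimension is fixed, and the dimension is visible in the $f$-vector; so two Cohen--Macaulay complexes have the same $f$-vector precisely when they have the same dimension and the same $h$-vector, and Conjecture~\ref{kalai} becomes: \emph{every Cohen--Macaulay flag complex $\Delta$ of dimension $d-1$ has the same $h$-vector as some balanced Cohen--Macaulay complex of dimension $d-1$.} Next I would invoke the Bj\"orner--Frankl--Stanley characterisation: the $h$-vectors of balanced Cohen--Macaulay complexes of dimension $d-1$ are exactly the $f$-vectors of simplicial complexes of dimension $\le d-1$. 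The inclusion I actually use, namely ``balanced CM $\Rightarrow$ $f$-vector'', is elementary: using the special linear system of parameters $\theta_j=\sum_{v\in V_j}x_v$ one checks that $x_v^2=0$ in the Artinian reduction $\bar{A}(\Delta)$, so $\operatorname{HF}(\bar{A}(\Delta))$ is the Hilbert function of a quotient of $K[y_1,\dots,y_m]/(y_1^2,\dots,y_m^2)$, which by Clements--Lindstr\"om is the $f$-vector of a simplicial complex. Hence it suffices to prove: \emph{if $\Delta$ is a Cohen--Macaulay flag complex of dimension $d-1$, then $h(\Delta)$ is the $f$-vector of a simplicial complex of dimension $\le d-1$.}

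To do that I would pass to the Artinian reduction and read off the flag hypothesis as a statement about quadrics. Working over an infinite field, $h(\Delta)=\operatorname{HF}(\bar{A}(\Delta))$ with $\bar{A}(\Delta)=K[\Delta]/(\theta_1,\dots,\theta_d)$ for a generic linear system of parameters; after a linear change of coordinates taking $\theta_1,\dots,\theta_d$ to the last $d$ variables one gets $\bar{A}(\Delta)\cong R/\bar{I}$ with $R=K[y_1,\dots,y_{n-d}]$ ($n$ the number of vertices of $\Delta$) and $\bar{I}$ the image of $I_\Delta$. Since $\Delta$ is flag, $I_\Delta$ is generated by quadratic monomials, so $\bar{I}$ is generated by quadrics; and since $\bar{A}(\Delta)$ is Artinian, the degree-$2$ part of $\bar{I}$ already generates an $\mathfrak{m}$-primary ideal, so $n-d$ generic elements of it form a regular sequence in $R$. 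Thus $\bar{I}$ contains a regular sequence of $n-d$ quadrics, and at this point the Eisenbud--Green--Harris conjecture in the case of all degrees equal to $2$ yields a monomial ideal $J\supseteq(y_1^2,\dots,y_{n-d}^2)$ with $\operatorname{HF}(R/J)=\operatorname{HF}(\bar{A}(\Delta))=h(\Delta)$. Such a Hilbert function is, by the correspondence $f_{k-1}=\dim_K(R/J)_k$, the $f$-vector of a simplicial complex on at most $n-d$ vertices, and it vanishes in degrees $>d$ because $\Delta$ is Cohen--Macaulay of dimension $d-1$, so that complex has dimension $\le d-1$; combined with the Bj\"orner--Frankl--Stanley description, this closes the argument.

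The hard part will be the last step, since the quadratic case of the Eisenbud--Green--Harris conjecture is itself open; as it stands the argument proves Conjecture~\ref{kalai} only conditionally. To remove the hypothesis one should exploit that the ideals $\bar{I}$ occurring here are special --- Artinian reductions of Stanley--Reisner ideals of flag complexes --- and try to produce directly a ``monomial-like'' regular sequence of quadrics inside $\bar{I}$ for which EGH is known, or else restrict to small $d$. The numerical reformulation above makes the low-dimensional cases concrete: for $d=2$ it is the statement that a connected triangle-free graph on $n$ vertices with $e$ edges satisfies $e-n+1\le\binom{n-2}{2}$, which follows from Mantel's theorem for all $n$; and $d=3$ should be reachable from the known small-variable cases of EGH. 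So I would expect to obtain Conjecture~\ref{kalai} unconditionally for $d\le 3$ and, in general, modulo the quadratic Eisenbud--Green--Harris conjecture.
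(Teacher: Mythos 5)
The statement you are trying to prove is presented in the paper as an \emph{open conjecture}, and the paper never claims a proof of it; so the right standard to apply to your note is whether the reduction you propose is logically sound, not whether it closes the problem. You acknowledge that the argument is conditional on the quadratic case of Eisenbud--Green--Harris, which is fair, but even granting quadratic EGH there is a genuine gap, and it sits precisely where the paper is most careful.

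The Bj\"orner--Frankl--Stanley theorem is misstated. BFS does not say that $h$-vectors of $(d-1)$-dimensional balanced CM complexes coincide with $f$-vectors of arbitrary simplicial complexes of dimension $\le d-1$; it says they coincide with $f$-vectors of \emph{$d$-colorable} (balanced) simplicial complexes. The complete graph $K_n$ for $n>d$ shows these classes differ: $(1,n,\binom{n}{2})$ is an $f$-vector of a $1$-dimensional complex but is not the $f$-vector of any $d$-colorable complex, hence not the $h$-vector of any balanced CM complex of dimension $d-1$. Your EGH step only produces, from a complete intersection of quadrics inside $\bar I$, a monomial ideal containing the squares of the variables and with the same Hilbert function; that gives $h(\Delta)=f(\Gamma)$ for \emph{some} $\Gamma$, with no control on the chromatic number of $\Gamma$. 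So the chain ``quadratic EGH $\Rightarrow$ $h(\Delta)$ is an $f$-vector $\Rightarrow$ (BFS) $h(\Delta)$ is the $h$-vector of a balanced CM complex $\Rightarrow$ Kalai'' breaks at the BFS step. This is exactly the daylight the paper leaves between Conjecture~\ref{weakkalai} (which, as they observe, \emph{is} a special case of EGH and would follow from it) and Kalai's Conjecture~\ref{kalai}: the paper only asserts Kalai $\Rightarrow$ weakkalai, never the converse. To recover Kalai the paper instead goes through the stronger Conjecture~\ref{conjalexvabba1} --- that $h(\Delta)$ is the $f$-vector of a \emph{flag} complex --- and then invokes Frohmader's theorem to upgrade the flag complex to a balanced one before applying BFS and adding cone points (see the Remark at the start of Section~5). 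Your argument lacks any substitute for that flag/balanced upgrade.

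The $d=2$ discussion illustrates the same point numerically. The inequality you derive, $e-n+1\le\binom{n-2}{2}$, is Macaulay's bound for the $h$-vector, i.e.\ the weakkalai statement, not the one Kalai needs. What Kalai needs at $d=2$ is that $(1,n,e)$ be the $f$-vector of a connected bipartite graph, i.e.\ $e\le\lfloor n^2/4\rfloor$, which for $n\ge 5$ is strictly stronger. Mantel indeed gives $e\le\lfloor n^2/4\rfloor$ for triangle-free graphs, so Kalai does hold for $d=2$ --- but that is not ``follows from the EGH-style inequality,'' it is a direct and stronger bound that happens to coincide with the bipartite edge count. In general you would need, beyond EGH, a ``colored'' version in the spirit of Frankl--F\"uredi--Kalai, or an argument producing a flag target complex (as in the paper's Conjecture~\ref{conjalexvabba1}), to reach balanced CM complexes.
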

\noindent Since the $h$-vectors of Cohen-Macaulay, balanced simplicial complexes are  $f$-vectors of simplicial complexes, the following would be a consequence of Kalai's Conjecture \ref{kalai}:
\begin{conjecture}\label{weakkalai}
The following inclusion holds true:
\begin{equation}
\!\! \left \{ \! \!
	\begin{array}{c}
	\hbox{$h$-vectors of Cohen-Macaulay,} \\
	\hbox{ flag simplicial complexes}
	\end{array}
\right \}
\subseteq
\left \{ 
	\begin{array}{c}
	\hbox{$f$-vectors of}  \\
 	\hbox{simplicial complexes}
	\end{array}
\!\!\!\right \}. \nonumber
\end{equation}
\end{conjecture}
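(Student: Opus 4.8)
The plan is to translate the statement into a problem on Hilbert functions of Artinian algebras and then to attack it through the quadric case of the Eisenbud--Green--Harris conjecture \cite{EGH}. Let $\Delta$ be a Cohen--Macaulay flag complex of dimension $d-1$ on $n$ vertices, viewed as the clique complex of a graph $G$, and set $S=K[x_1,\dots,x_n]$. Because $\Delta$ is flag the Stanley--Reisner ideal $I_\Delta$ is generated by quadrics, and because $\Delta$ is Cohen--Macaulay, modding $K[\Delta]$ out by a generic linear system of parameters $\theta_1,\dots,\theta_d$ produces the Artinian algebra $\AG$, whose Hilbert function $\HF$ is exactly the $h$-vector of $\Delta$. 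Writing $c=n-d$ and $R=K[y_1,\dots,y_c]$ for the ambient polynomial ring of $\AG$, we get $\AG=R/J$, where $J$ is an $\mm$-primary ideal generated by quadrics (the images of the generators of $I_\Delta$; in particular $J$ has no linear part, so $h_1(\Delta)=\dim_K R_1=c$). The first step is to observe that $J$ contains a regular sequence of $c$ quadrics: since $J$ is $\mm$-primary and generated in degree $2$, graded prime avoidance over the infinite field $K$ produces $q_1,\dots,q_c\in J$ of degree $2$ with $\height(q_1,\dots,q_i)=i$ for all $i$, and these form a regular sequence because $R$ is Cohen--Macaulay.

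The second step is to feed $R/J$ into the EGH conjecture for quadrics: as $J$ contains a regular sequence of $c$ quadrics in the polynomial ring $R$ in $c$ variables, EGH predicts a monomial ideal $J'\supseteq(y_1^2,\dots,y_c^2)$ with $\operatorname{HF}_{R/J'}=\HF$. The point that makes the conclusion combinatorial is that $(y_1^2,\dots,y_c^2)$ already contains \emph{every} non-squarefree monomial of $R$, so the squarefree monomials lying outside $J'$ are closed under taking divisors, i.e.\ they form the face set of a simplicial complex $\Gamma$ on a subset of $\{y_1,\dots,y_c\}$. Counting monomials in each degree then gives $h_i(\Delta)=\dim_K(R/J)_i=\dim_K(R/J')_i=f_{i-1}(\Gamma)$ for every $i$, so that $h(\Delta)$ is the $f$-vector of $\Gamma$ --- which is precisely the inclusion claimed in Conjecture~\ref{weakkalai}.

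The main obstacle is of course that the EGH conjecture is itself open, so this argument proves Conjecture~\ref{weakkalai} only in ranges where its quadric case is known --- in particular for small codimension $c=n-d$, i.e.\ for flag complexes with few vertices --- and otherwise reduces the problem to proving an EGH-type statement for the special quadratic ideals $J$ arising as Artinian reductions of flag complexes. In low dimension one may instead try to verify the Kruskal--Katona conditions for $h(\Delta)$ by hand, combining Macaulay's bound with the extra inequalities forced on $\HF$ by the quadratic generation of $\AG$ (for example $h_2\le\binom{h_1+1}{2}$, together with sharper restrictions in higher degrees). A genuinely different, purely combinatorial route would be to prove Kalai's Conjecture~\ref{kalai} and then invoke the Bj\"orner--Frankl--Stanley characterization \cite{BFS} of the $h$-vectors of Cohen--Macaulay balanced complexes; but that means establishing a strictly stronger statement and appears to be no easier.
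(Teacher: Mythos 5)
Your argument is not a proof of Conjecture~\ref{weakkalai}, and you acknowledge this: the decisive step invokes the quadric case of the Eisenbud--Green--Harris conjecture, which is open. What you have written is a correct and clean \emph{reduction} --- the generic Artinian reduction $R/J$ of $K[\Delta]$ is a quadratic Artinian $K$-algebra with Hilbert function $h(\Delta)$, and a monomial ideal $J'\supseteq(y_1^2,\dots,y_c^2)$ with the same Hilbert function would hand you the face complex $\Gamma$ via Remark~\ref{artinian} --- but it only re-derives what the paper already states in its introduction, namely that Conjecture~\ref{weakkalai} is a particular case of Conjecture~\ref{EGHconj}. Note also that the paper itself does \emph{not} prove Conjecture~\ref{weakkalai}; it is stated as a conjecture and left open.

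The paper's actual unconditional contribution in this direction is Theorem~\ref{generalvd}, which establishes the inclusion under the stronger hypothesis that $\Delta$ is \emph{vertex decomposable}, and its method is entirely different from yours. Instead of passing to the Artinian reduction and relying on EGH, it inducts on a shedding vertex $v$: setting $\Delta_1=\Delta\setminus v$ and $\Delta_2=\link_\Delta v$, one has $h_i(\Delta)=h_i(\Delta_1)+h_{i-1}(\Delta_2)$; Lemma~\ref{inequality $h$-vectors} gives $h_i(\Delta_2)\le h_i(\Delta_1)$ because $K[\Delta_2]$ is a Cohen--Macaulay quotient of $K[\Delta_1]$ by linear forms; by induction and Kruskal--Katona one may take rev-lex complexes $\Gamma_2\subseteq\Gamma_1$ with $f(\Gamma_j)=h(\Delta_j)$, and then $\Gamma=\Gamma_1*_{\Gamma_2}\{u\}$ does the job. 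That route yields a theorem rather than a conditional statement, but only for vertex decomposable complexes; your EGH route, if EGH were known, would cover all Cohen--Macaulay flag complexes and in fact all quadratic Artinian algebras. To obtain something provable now you must either restrict to ranges where the quadric EGH is established, or trade the EGH step for the inductive argument of Theorem~\ref{generalvd} under a vertex-decomposability hypothesis.
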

\noindent Actually, the above inclusion is a particular case of a more general conjecture by Eisenbud, Green and Harris  (see \cite{EGH} or  the lecture notes by Valla \cite{Va}), which can be stated as:
\begin{conjecture}[Eisenbud, Green and Harris]\label{EGHconj} The following inclusion holds true:
\begin{equation}
\!\! \left \{ \! \!
	\begin{array}{c}
	\hbox{$h$-vectors of} \\
	\hbox{quadratic Artinian $K$-algebras}
	\end{array}
\right \}
\subseteq
\left \{ 
	\begin{array}{c}
	\hbox{$f$-vectors of}  \\
 	\hbox{simplicial complexes}
	\end{array}
\!\!\!\right \}. \nonumber
\end{equation}
\end{conjecture}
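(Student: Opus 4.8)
The plan is to reduce Conjecture~\ref{EGHconj} to a purely combinatorial fact about monomial ideals, in three steps; I treat the characteristic-zero case, where generic initial ideals behave best (the general case would need the $p$-Borel analogue of Step~2). Write $A=S/I$ with $S=K[x_1,\dots,x_n]$, $I$ generated by quadrics and $A$ Artinian; by the field-extension invariance of the problem we may assume $K$ infinite. \emph{Step~1: $I$ contains a regular sequence of $n$ quadrics.} Since $A$ is Artinian, $\sqrt I=\mm$, of height $n$. If $f_1,\dots,f_k\in I_2$ is a regular sequence with $k<n$, then $S/(f_1,\dots,f_k)$ is Cohen--Macaulay of dimension $n-k$, hence unmixed with associated primes all of height $k<n$; none of them can contain $\mm=\sqrt I$, so none contains $I_2$ (recall $I$ is generated in degree~$2$). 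By prime avoidance in its graded form (here $K$ is infinite), a general $f_{k+1}\in I_2$ is a nonzerodivisor on $S/(f_1,\dots,f_k)$; iterating from $k=0$ to $k=n-1$ produces a regular sequence $f_1,\dots,f_n\in I$ of quadrics. In particular $\operatorname{HF}_A(i)\le\binom ni$ for all $i$, with equality for all $i$ exactly when $I=(f_1,\dots,f_n)$.

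\emph{Step~2: realize $\operatorname{HF}_A$ by a monomial ideal containing $x_1^2,\dots,x_n^2$.} This is the heart of the matter, and amounts to the lex-plus-powers form of EGH. Passing to $\init(I)$ for the reverse-lexicographic order after a generic coordinate change yields a strongly stable monomial ideal with $\operatorname{HF}_{S/\init(I)}=\operatorname{HF}_A$; since $\init(I)_2\neq 0$, strong stability already forces $x_1^2\in\init(I)$, and Artinianness forces pure powers $x_i^{a_i}\in\init(I)$ for each $i$. The obstruction is that $a_2,\dots,a_n$ may be larger than~$2$. One would like to replace $\init(I)$ by the lex-plus-powers ideal $L+(x_1^2,\dots,x_n^2)$ of the same Hilbert function, $L$ being the appropriate lexsegment ideal; its existence is equivalent to the assertion that $\operatorname{HF}_A$ satisfies the Clements--Lindström growth bound valid in the ring $S/(x_1^2,\dots,x_n^2)$. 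To establish that bound I see two avenues: liaison --- the complete intersection $(f_1,\dots,f_n)$ is Gorenstein, $I$ is linked inside it to $(f_1,\dots,f_n):I$, and one tries to run the Clements--Lindström estimate through the linkage Hilbert-function formula --- or a carefully chosen flat degeneration carrying $f_1,\dots,f_n$ to the coordinate squares $x_1^2,\dots,x_n^2$ while preserving $\operatorname{HF}_A$ (an arbitrary degeneration will not do, which is precisely the subtlety).

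\emph{Step~3: from the monomial ideal to a simplicial complex, and the obstacle.} Suppose $J\supseteq(x_1^2,\dots,x_n^2)$ is a monomial ideal with $\operatorname{HF}_{S/J}=\operatorname{HF}_A$. Then every monomial outside $J$ is squarefree, so $\Delta:=\{\,F\subseteq\{1,\dots,n\}:\prod_{i\in F}x_i\notin J\,\}$ is a simplicial complex and $\dim_K(S/J)_i=f_{i-1}(\Delta)$ for every $i$; hence the $h$-vector $\operatorname{HF}_A$ is exactly the $f$-vector $(1,f_0(\Delta),f_1(\Delta),\dots)$ of $\Delta$, which is the claimed inclusion. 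Steps~1 and~3 are formal; all the difficulty is concentrated in Step~2, in producing the lex-plus-powers companion of $A$. Since the growth estimate demanded there is essentially a restatement of the conjecture itself, any complete proof must inject genuinely new input about quotients by regular sequences of quadrics --- and the two routes above, liaison inside a complete intersection of quadrics and a Hilbert-function-preserving degeneration of the regular sequence onto the coordinate squares, are where I would look first.
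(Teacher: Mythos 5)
This is not a theorem in the paper; it is a famous \emph{open conjecture} of Eisenbud, Green, and Harris, stated by the authors without proof (and indeed still open in general). You recognize this yourself: at the end of Step~2 you write that ``the growth estimate demanded there is essentially a restatement of the conjecture itself,'' which is exactly right. So your text is not a proof and cannot be evaluated as one; it is a (correct and reasonable) reduction of the conjecture to its standard lex-plus-powers / Clements--Lindstr\"om formulation, together with two speculative leads (liaison in the complete intersection $(f_1,\dots,f_n)$, or a Hilbert-function-preserving degeneration of the $f_i$ onto the coordinate squares), neither of which you carry out.

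On the mathematics that \emph{is} there: Step~1 (prime avoidance gives a length-$n$ regular sequence of quadrics inside $I$) is correct for $K$ infinite, and Step~3 is exactly the translation the paper records in Remark~\ref{artinian}: a monomial Artinian quotient $S/J$ with $x_1^2,\dots,x_n^2\in J$ has Hilbert function equal to the $f$-vector of the simplicial complex of squarefree monomials outside $J$. The entire difficulty is concentrated in Step~2, the claim that $\operatorname{HF}_A$ obeys the Macaulay-type growth bound of the Clements--Lindstr\"om ring $S/(x_1^2,\dots,x_n^2)$; a generic initial ideal gives you a strongly stable $\init(I)$ with $x_1^2\in\init(I)$ but does not control the exponents of the other pure powers, and no degeneration or linkage argument in your sketch closes that gap. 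Since the paper makes no attempt to prove this conjecture --- it cites it precisely because it is open and then proves \emph{special cases} of its consequences (Theorem~\ref{generalvd}, Theorem~\ref{dimcodim}) --- there is no paper proof to compare against, and your proposal, while an honest and accurate account of where the obstruction lies, does not establish the statement.
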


After introducing most of the terminology that we will need, in Section 2 we present a few  results and remarks that we will use throughout this paper. In particular, in Theorem \ref{structure}, we will extend results of Crupi, Rinaldo and Terai from \cite{CRT} and of the two authors from \cite{CV}.

In the third section  we will prove Conjecture \ref{weakkalai} for vertex decomposable, flag simplicial complexes (Theorem \ref{generalvd}). This section also includes an example of a $h$-vector of a quadratic Artinian algebra, which is the $f$-vector of a balanced complex, but not the $h$-vector of a Cohen-Macaulay,  flag  simplicial complex (Example \ref{balanced}). The section ends with a few comments on some technical aspects in  the proof  of Theorem \ref{generalvd}.

In Section 4 we will first notice that the $f$-vector of a flag simplicial complex is always the $h$-vector of a vertex decomposable, balanced, flag simplicial complex (Proposition \ref{fflaghbvd}). This result led us to the statement:
\begin{conjecture}\label{conjalexvabba} The following equality holds true:
\begin{equation}\label{eqalexvabba}
\!\! \left \{ \! \!
	\begin{array}{c}
	\hbox{$h$-vectors of vertex decomposable,} \\
	\hbox{balanced, flag simplicial complexes}
	\end{array}
\right \}
=
\left \{ 
	\begin{array}{c}
	\hbox{$f$-vectors of}  \\
 	\hbox{flag simplicial complexes}
	\end{array}
\!\!\!\right \}. 
\end{equation}\end{conjecture}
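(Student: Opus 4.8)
\emph{A proof strategy.} The inclusion $\supseteq$ in \eqref{eqalexvabba} is exactly the content of Proposition \ref{fflaghbvd}, so the whole problem comes down to the reverse inclusion: the $h$-vector of a vertex decomposable, balanced, flag complex $\Delta$ should be the $f$-vector of some flag complex. I would argue by induction on the number of vertices of $\Delta$. If $\Delta$ is a simplex (or the complex $\{\emptyset\}$), the claim is trivial. Otherwise $\Delta$ has a shedding vertex $v$; since $\Delta$ is Cohen--Macaulay (hence pure), every facet of $\Delta\setminus v$ turns out to be a facet of $\Delta$, so $\Delta\setminus v$ is vertex decomposable, flag, pure of dimension $\dim\Delta$ and still balanced for the induced colouring, while $\lk_\Delta v$ is vertex decomposable, flag, pure of dimension $\dim\Delta-1$ and balanced on the remaining colours. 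From the identity $f(\Delta;t)=f(\Delta\setminus v;t)+t\,f(\lk_\Delta v;t)$ and the linearity of the $f\mapsto h$ correspondence (legitimate because $\Delta,\Delta\setminus v$ have dimension $\dim\Delta$ and $\lk_\Delta v$ has dimension $\dim\Delta-1$) one obtains $h_i(\Delta)=h_i(\Delta\setminus v)+h_{i-1}(\lk_\Delta v)$ for all $i$. By induction there are flag complexes $\Gamma_1$ and $\Gamma_2$ with $f(\Gamma_1)=h(\Delta\setminus v)$ and $f(\Gamma_2)=h(\lk_\Delta v)$, and it remains to build a flag complex $\Gamma$ with $f_{i-1}(\Gamma)=f_{i-1}(\Gamma_1)+f_{i-2}(\Gamma_2)$ for all $i$.

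For the gluing step I would use the following construction. Suppose $\Gamma_2$ is realized as a \emph{full} (induced) subcomplex of $\Gamma_1$, and let $w$ be a new vertex; put
\begin{equation*}
\Gamma \;:=\; \Gamma_1 \,\cup\, \bigl\{\, \{w\}\cup\tau \,:\, \tau\in\Gamma_2 \,\bigr\}.
\end{equation*}
Sorting the faces of $\Gamma$ by whether they contain $w$ gives at once $f(\Gamma;t)=f(\Gamma_1;t)+t\,f(\Gamma_2;t)$, which is what we want. Moreover a short inspection of minimal non-faces shows that $\Gamma$ is flag --- but this uses crucially that $\Gamma_2$ is \emph{full} in $\Gamma_1$, since otherwise a minimal non-face of the form $\{w\}\cup\{a,b\}$ of cardinality three can appear (precisely when $\{a,b\}$ is a non-face of $\Gamma_2$ but a face of $\Gamma_1$). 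On the geometric side the required fullness is automatic: because $\Delta$ is flag, $\lk_\Delta v$ is a full subcomplex of $\Delta\setminus v$ (a face of $\Delta\setminus v$ all of whose vertices are joinable to $v$ is joinable to $v$).

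The heart of the matter --- and, I expect, the main obstacle --- is to make the induction compatible enough that the geometric inclusion $\lk_\Delta v\subseteq\Delta\setminus v$ is carried over to the combinatorial inclusion $\Gamma_2\subseteq\Gamma_1$ \emph{as a full subcomplex}. This seems to require fixing, uniformly over all vertex decomposable balanced flag complexes, a rule that produces simultaneously a shedding order and the associated realizing flag complex, and then proving that under this rule links go to full subcomplexes of deletions while the two recursion trees (for $\Delta\setminus v$ and for $\lk_\Delta v$) remain compatible down to the leaves. I do not see how to guarantee this in general, which is presumably why the equality \eqref{eqalexvabba} is only conjectural. A second, algebraic line of attack would use Stanley's partition parameters $\theta_j=\sum_{\col(u)=j}x_u$: the Artinian reduction $A=K[\Delta]/(\theta_1,\dots,\theta_d)$ has Hilbert function $h(\Delta)$ and is Koszul (a linear section of the Koszul algebra $K[\Delta]$, as $\Delta$ is flag), hence is defined by quadrics; if these quadrics could be chosen to form a Gr\"obner basis in suitable coordinates and for a suitable term order, then --- $A$ being Artinian --- the initial ideal would necessarily be of the form $I_\Gamma+(x_u^2 : u)$ with $\Gamma$ flag, giving $h(\Delta)=f(\Gamma)$. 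Thus the conjecture would follow from a $G$-quadratic refinement of Fr\"oberg's theorem for these particular Artinian reductions --- a statement that, as far as I know, is open and is not a formal consequence of Koszulness.
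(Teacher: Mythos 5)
The statement you were asked to prove is a \emph{conjecture}, not a theorem: the paper explicitly says ``We were not able to find a proof for the above equality,'' and it is never proved. You correctly flag this, and the useful question is whether your analysis of the difficulty matches the authors'. It does, quite closely. The inclusion $\supseteq$ via Proposition~\ref{fflaghbvd} is exactly what the paper records. Your first, combinatorial line of attack --- induct on a shedding vertex, get $h_i(\Delta)=h_i(\Delta\setminus v)+h_{i-1}(\lk_\Delta v)$, realize $\Gamma_1,\Gamma_2$ flag by induction, and glue as $\Gamma_1*_{\Gamma_2}\{w\}$ --- is precisely the strategy the authors discuss at the end of Section~3, where they point to the same obstruction you do: even assuming $\Gamma_1,\Gamma_2$ flag by induction, one cannot in general arrange $\Gamma_2$ to be an \emph{induced} subcomplex of $\Gamma_1$, so the join can acquire cardinality-three minimal non-faces. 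Your observation that $\lk_\Delta v$ is automatically full in $\Delta\setminus v$ for $\Delta$ flag is correct, but, as you say, that fullness is not easily transported through the $f\mapsto h$ step.

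Your second, algebraic line is essentially what powers Theorem~\ref{balanced-quasi-flag}. There the authors pass to the Artinian reduction of $K[\Delta_1]$ by the balanced linear system of parameters, take a degree--reverse--lexicographic order in which the variables $x_i$ with $i\in W$ are smallest, and invoke \cite[Proposition~15.12]{Ei}; since the squares of all variables lie in the reduced ideal, the initial ideal is of the form $J_{\Gamma_1}$, and the choice of order forces $\Gamma_2=(\Gamma_1)_W$, which resolves the fullness issue by construction. What remains open is exactly the G-quadraticity question you raise: the defining ideal of the Artinian reduction is generated by quadrics (it is Koszul, by Fr\"oberg and Backelin--Fr\"oberg, as you note), but nothing guarantees its initial ideal is generated in degree two; so $\Gamma_1$, and hence $\Gamma$, is only shown to be \emph{quasi-flag}, which is why Theorem~\ref{balanced-quasi-flag} is stated with the weaker conclusion. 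In short, your two strategies and their respective obstructions align, up to phrasing, with the paper's own partial results, and you have correctly located the open problem.
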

\noindent We were not able to find a proof for the above equality. However, relaxing the requests on the right hand side or strengthening the ones on the left we will be able to prove the hard inclusion of Conjecture \ref{conjalexvabba}. 
First, in Definition \ref{quasi-flag} we introduce a new class of simplicial complexes -- the quasi-flag simplicial complexes. It turns out that flag complexes are quasi-flag and in general the converse is not true. However, we are not aware of any quasi-flag simplicial complex whose $f$-vector is not the one of a flag simplicial complex.
We will then  prove the following inclusion (Theorem \ref{balanced-quasi-flag}):
\begin{equation}
\!\! \left \{ \! \!
	\begin{array}{c}
         \hbox{$h$-vectors of vertex decomposable,} \\
	\hbox{balanced, flag simplicial complexes} 
	\end{array}
\right \}
\subseteq
\left \{ 
	\begin{array}{c}
	\hbox{$f$-vectors of}  \\
 	\hbox{quasi-flag simplicial complexes}
	\end{array}
\!\!\!\right \} .\nonumber
\end{equation}
 
In the fifth section we are going to discuss a natural extension of Conjecture \ref{conjalexvabba}:
\begin{conjecture}\label{conjalexvabba1} The following equality holds true:
\begin{equation}
\!\! \left \{ \! \!
	\begin{array}{c}
	\hbox{$h$-vectors of Cohen-Macaulay,} \\
	\hbox{ flag simplicial complexes}
	\end{array}
\right \}
=
\left \{ 
	\begin{array}{c}
	\hbox{$f$-vectors of}  \\
 	\hbox{flag simplicial complexes}
	\end{array}
\!\!\!\right \}. \nonumber
\end{equation}
\end{conjecture}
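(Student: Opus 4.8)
The inclusion $\supseteq$ is already at hand: by Proposition~\ref{fflaghbvd} the $f$-vector of a flag simplicial complex is the $h$-vector of a vertex decomposable, balanced, flag simplicial complex, and vertex decomposable complexes are Cohen--Macaulay. So the entire content is the reverse inclusion: starting from the $h$-vector $h=(1,h_1,\dots,h_s)$ of a Cohen--Macaulay, flag simplicial complex $\Delta$, one must produce a \emph{flag} simplicial complex whose $f$-vector is $h$. This is Conjecture~\ref{weakkalai} with the target complex required to be flag, so in particular a proof would settle Conjecture~\ref{weakkalai} for all Cohen--Macaulay flag complexes.

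The plan is to first move to the Artinian setting. After extending $K$ if necessary (which does not affect $h$), pick a linear system of parameters $\theta_1,\dots,\theta_d$ for $K[\Delta]$ consisting of generic linear forms; since $\Delta$ is Cohen--Macaulay, $A:=K[\Delta]/(\theta_1,\dots,\theta_d)$ is Artinian with Hilbert function $h$, and because $I_\Delta$ is generated in degree $2$ while $\dim_K A_1=h_1$ is as large as possible, $A$ is a quadratic Artinian $K$-algebra. Conjecture~\ref{weakkalai}, equivalently Conjecture~\ref{EGHconj} applied to $A$, would at this stage give a simplicial complex with $f$-vector $h$; the additional task is to make that complex flag. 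Rather than passing to an arbitrary generic reduction, I would try to keep the combinatorics and build directly a squarefree quadratic monomial ideal with Hilbert function $h$, imitating the constructions behind Theorem~\ref{generalvd} and Proposition~\ref{fflaghbvd}: the goal would be to show that a suitable combinatorial shifting, or the reverse-lexicographic generic initial ideal of $I_\Delta$ after Artinian reduction, can be transported to a squarefree monomial ideal generated in degree $2$ with the same Hilbert function. An alternative, more structural route is to reduce the Cohen--Macaulay hypothesis to vertex decomposability: if every $h$-vector of a Cohen--Macaulay flag complex also occurred as the $h$-vector of a vertex decomposable flag complex, then a flag-refined version of Theorem~\ref{generalvd} would conclude. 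One could attempt this using the structural description of Theorem~\ref{structure}, peeling a shedding vertex and inducting on the number of vertices while tracking the $h$-vector through links and deletions.

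I expect the main obstacle to be precisely this reduction. Cohen--Macaulay flag complexes need not be vertex decomposable, nor even shellable, so the recursive peeling available in the vertex decomposable case is not directly applicable, and without it the $h$-vector identities for links and deletions become hard to control. Equally serious is that one must land \emph{inside} the $f$-vectors of flag complexes and not merely inside the $f$-vectors of arbitrary simplicial complexes, as bounds of Macaulay or EGH type would yield; those bounds do not see the quadratic generation of the defining ideal, so a genuinely new ingredient seems necessary. For this reason I would first test the strategy in the Gorenstein case and for complexes of small dimension or small codimension, where Theorem~\ref{structure} is most explicit, before attacking the statement in full generality.
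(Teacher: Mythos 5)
This statement is Conjecture~\ref{conjalexvabba1} and is left open in the paper; there is no proof of it to compare against. Your ``proposal'' is accordingly not a proof but a strategy sketch, and to your credit you say so explicitly: you flag the reduction from Cohen--Macaulay to vertex decomposable as the main obstacle and you note that Macaulay/EGH-type bounds do not see the quadratic generation, so they cannot force the target complex to be flag.

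What you write does match the paper's own partial evidence rather closely. You correctly isolate the easy inclusion $\supseteq$ via Proposition~\ref{fflaghbvd}. Your suggestion to first test the Gorenstein case and small dimension/codimension corresponds exactly to what the paper actually proves: Proposition~\ref{short} settles the case $h=(1,n,m)$ (by reducing to a Koszul Artinian algebra and invoking the bound $m\le n^2/4$ from \cite{CTV}, then realizing the $f$-vector with a bipartite graph), and Theorem~\ref{dimcodim} settles the case of $(d-1)$-dimensional CM flag complexes on $[2d]$ without cone points (by invoking Theorem~\ref{structure} to upgrade CM to vertex decomposable and balanced, then applying Theorem~\ref{dimcodimb}). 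Your idea of reducing CM to vertex decomposable is precisely what Theorem~\ref{structure} accomplishes in the $n=2d$, no-cone-point regime, and the paper itself explains in Section~3 (after Theorem~\ref{generalvd}) why the recursive peeling does not directly deliver a flag complex $\Gamma$: the quadratic Artinian reduction need not be monomial, so the link $\Gamma_2$ need not sit inside $\Gamma_1$ as an induced subcomplex, which is exactly the obstruction you point at. So the two ``gaps'' you name are the genuine ones, and they remain open in the paper; presenting your text as a proof would be wrong, but as a diagnosis of where the difficulty lies it is accurate and consistent with the authors' own discussion.
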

\noindent In Proposition \ref{short} we will see that the above conjecture is true when the $h$-vector is of the form $(1,n,m)$.  We will then prove the  following result (Theorem \ref{dimcodim}):
\begin{equation}
\!\! \left \{ \! \!
	\begin{array}{c}
         \hbox{$h$-vectors of $(d-1)$-dimensional} \\
	\hbox{Cohen-Macaulay, flag simplicial complexes}  \\
 	\hbox{on $[2d]$,  without cone points}
	\end{array}
\right \}
=
\left \{ 
	\begin{array}{c}
	\hbox{$f$-vectors of flag}  \\
 	\hbox{ simplicial complexes on $[d]$}
	\end{array}
\!\!\!\right \} .\nonumber
\end{equation}
In a certain sense the above result is a first step towards proving Conjecture \ref{conjalexvabba1}. This is because  when $\D$ is a Cohen-Macaulay, $(d-1)$-dimensional, flag simplicial complex on $[n]$, without cone points, we have $n \ge 2d$. 

In the last section we will come back to Conjecture \ref{conjalexvabba}. We introduce two properties of simplicial complexes and show that for each of them, if added on the left hand side of \eqref{eqalexvabba}, the conjecture holds. We also include examples of simplicial complexes with or without these properties.

Many results in this paper have been suggested and double-checked by extensive computer algebra experiments performed  with CoCoA \cite{cocoa}.

The authors wish to thank Isabella Novik and Volkmar Welker for their useful suggestions and comments. We  also thank Aldo Conca for his support and helpful remarks. 

\section{Preliminaries} 
Let us start by introducing some terminology and notation that we will use throughout the paper. 
For general aspects on the topics presented below we refer the reader to the books 
of Stanley \cite{St} , of Bruns and Herzog \cite{BH} and of Lov\'asz and Plummer \cite{LP}.

For a positive integer $n$ denote by $[n]$ the set $\{1,\ldots,n\}$. A simplicial complex $\Delta$ on $[n]$ is a collection of subsets of $[n]$ such that $ F \in \Delta$ and $ F ' \subset  F$ imply $ F ' \in \Delta$. We will also require that for every $i \in [n]$ we have $\{i\} \in \Delta$. Each element $F\in\D$ is called a \emph{face} of $\D$. A maximal face of $\D$ with respect to inclusion is called a $\emph{facet}$ and  we will denote by $\mathcal{F}(\Delta)$ the set of facets of $\Delta$.  We call a vertex $v$ a \emph{cone point} of $\D$ if $v \in F$ for any $F \in \mathcal{F}(\D)$. A simplicial complex is called \emph{pure} if all facets have the same cardinality.
The dimension of a face $F$ is $|F| -1$ and the \emph{dimension of $\D$} is $\max\{\dim F ~:~ F \in \D\}$.  

Let $f_i = f_i(\D)$ denote the number of faces of $\D$ of dimension $i$, in particular $f_{-1}=1$ and $f_0= n$. The sequence $f(\D)=(f_{-1},f_{0},\ldots,f_{d-1})$, where $d-1$ is the dimension of $\D$, is called the \emph{$f$-vector} of $\D$. 

Denote by $S=K[x_1,\ldots,x_n]$ the polynomial ring in $n$ variables over a field $K$ and let $\D$ be a simplicial complex on $[n]$. For each subset $F\subset[n]$ we set 
\[\texttt{x}_{F} = \prod_{i\in F} x_i.\] 
The \emph{Stanley-Reisner ideal} of $\D$ is the ideal $I_{\D}$ of $S$  generated by the square-free monomials $\texttt{x}_{F},$ with $F \notin\D$. That is 
\[I_\D = (\texttt{x}_{F}~:~F \textrm{~is a minimal nonface of~}\D).\]
We will denote by $K[\D] = S/I_\D$ the \emph{Stanley-Reisner ring} of $\D$. It is a well known fact that $\dim K[\D] = \dim\D +1$. We will denote by  $h(\D) = (h_0,h_1,\ldots,h_s) = h(K[\D])$, the $h$-vector of the graded algebra $K[\D]$. In other words, if $H_{K[\D]}(t)$ is the Hilbert series of $K[\D]$, we have
\[ H_{K[\D]}(t) = \frac{h_0+h_1t + \ldots + h_st^s}{(1-t)^d},\]
where $d$ is the Krull dimension of $K[\D]$ and $h_s\neq 0$. The sequence $h(\D)$ is called the \emph{$h$-vector of $\D$}. The $h$-vector of $\D$ can be determined directly from the $f$-vector of $\D$ using the relation:
\[\sum_{i=0}^{d}f_{i-1}(t-1)^{d-i} = \sum_{i=0}^{d}h_it^{d-i}.\] 
Comparing the coefficients we obtain the formula:
\begin{equation}\label{hvect}
h_j=\sum_{i=0}^j(-1)^{j-i}\binom{d-i}{j-i}f_{i-1}.
\end{equation}
It is  well known  that $s\le d$. So, as opposed to the $f$-vector, the $h$-vector does not contain precise information about the dimension of the simplicial complex. In other words, the $f$-vector can be determined from the $h$-vector only if  the dimension of $\Delta$ is also known.

Let $\D$ be a simplicial complex and $F$ a face of $\D$. The \emph{link of $F$ in $\D$} is the following simplicial complex: 
\[\link_{\D}F = \{ F' \in \D ~:~ F' \cup F \in \D \textrm{~and~} F' \cap F = \emptyset\}.\]
For a set of vertices $W\subset [n]$, the \emph{restriction of $\D$ to $W$} is the following subcomplex of $\D$:
\[ \D_W = \{F\in\D~:~F\subset W \}.\]
The subcomplex $\D_W$ is also  called the subcomplex of $\D$ \emph{induced by} the vertex set $W$.
If $[n]\setminus W = F$ is a face of $\D$, the subcomplex  $\D_W$ is called the  \emph{face deletion} of $F$ in $\D$.
We will abuse notation  and write $\D \setminus F = \{F' \in\D~:~F \not\subset F'\}$ for the face deletion of $F \in \D$. Whenever $F$ is a 0-dimensional face $\{v\}$ we will just write $\D\setminus v$ for the face deletion of $\{v\}$ and $\lk_\D v$ for the link of $\{v\}$.

Consider  $\D' \subseteq \D$ a subcomplex and let $\G$ be a simplicial complex with vertex set disjoint from the vertex set of $\D$. We define the \emph{star of $\D$ with $\G$ along $\D'$} to be the simplicial complex:
\[ \D\ast_{\D'} \G = \D ~\bigcup ~\{F' \cup F ~:~ F' \in \D' \textrm{~and~} F \in \G\}.\] 
It is easy to see that, for any $F\in\D$ the three definitions above are connected in the following way:
\[\D = (\D\setminus F)\ast_{\link_\D F} \langle F\rangle.\]

A simplicial complex $\D$ on $[n]$ is said to be $k$-colorable, for some $k\in \NN$, if there exists a function $\col : [n] \longrightarrow [k]$ such that if $\col(i)=\col(j)$ for $i\neq j$, then no face of $\D$ contains both $i$ and $j$. Obviously, if the dimension of $\D$ is $d-1$, then $k\geq d$. A ($d-1$)-dimensional simplicial complex is called \emph{balanced} if it is $d$-colorable.
For a balanced simplicial complex and for every $i\in [d]$ we  denote by  $V_i=\{v \in [n] ~:~ \col(v)=i\}$ the set of vertices colored $i$.
Fixing a coloring, the Stanley-Reisner ring of a balanced simplicial complex has a canonical linear system of parameters (see \cite[Proposition 4.3]{St}), given by 
\[ \theta_i= \sum_{j\in V_i} x_j.\]

A simplicial complex $\D$ is called Cohen-Macaulay (CM for short) over a field $K$ if and only if the ring $K[\D]$ is Cohen-Macaulay. If $\D$ is CM over any field $K$ then we simply say that $\D$ is CM. There are several combinatorial properties of simplicial complexes that imply Cohen-Macaulayness. In this paper we will focus on the following one.
A pure simplicial complex $\Delta$ is recursively defined to be \emph{vertex decomposable} if it is either a simplex or else has some vertex $v$ such that:
\begin{compactenum}
\item both $\Delta\setminus v$ and $\lk_\Delta v$ are vertex decomposable,
\item no face of $\lk_\Delta v$ is a facet of $\Delta\setminus v$.
\end{compactenum}
A vertex satisfying condition 2. above is called a \emph{shedding vertex}. 
As we mentioned above, a vertex decomposable simplicial complex is always CM. The other implication is known to be false in general. 

\begin{remark}
 If $\Delta$ is  vertex decomposable and balanced, the sets $V_i$ that we defined above are uniquely determined.
\end{remark} 

A simplicial complex is called \emph{flag} if all its minimal nonfaces have cardinality two. In other words, if its Stanley-Reisner ideal is generated by square-free monomials of degree two. Flag simplicial complexes are closely related to simple graphs, i.e. finite graphs with neither loops nor multiple edges. Let $G$ be a (simple) graph on the vertex set $V(G) = [n]$ and let $E(G)$ denote the set of its edges. We define the edge ideal of $G$ as the ideal: 
\[I(G) = (x_ix_j~:~ \{i,j\}\in E(G)) \subset S.\]
For a flag simplicial complex $\Delta$ we will denote by $G_{\Delta}$, or just $G$ if no confusion arrises, the graph of minimal nonfaces of $\Delta$. In particular $I_\Delta = I(G_\D)$.

Given the correspondence between Stanley-Reisner ideals of flag simplicial complexes and edge ideals of simple graphs we  also need to introduce some terminology related to graphs.  For a vertex $v \in V(G)$ we denote by $N(v) = \{w \in V(G)~:~ \{v,w\} \in E(G)\}$ the \emph{open neighborhood} of $v$ in $G$. By $N[v]$ we denote the \emph{closed neighborhood} of $v$, i.e. $N(V) \cup \{v\}$.  For a subset of vertices $W\in V(G)$ we define:
\[N(W) = (\bigcup_{v\in W}N(v))\setminus W.\]
A \emph{perfect matching} of $G$ is a collection of disjoint edges $\{e_1,\ldots,e_r\}$ of $G$ such that every vertex belongs to one of the edges, i.e. $V = \cup\, e_i$.
An $\emph{independent}$ set in $G$ is a collection of vertices $\{v_1,\ldots,v_r\}$ such that $\{v_i,v_j\} \notin E(G)$ for any $i,j \in \{1,\ldots,r\}$. An independent set is called maximal if it is not strictly included in any other independent set of $G$. Notice that the independent sets of $G$ form a simplicial complex, which we will denote by $\D(G)$. It is easy to see that $G_{\D(G)}=G$ and $\D(G_{\D})=\D$. 
A \emph{vertex cover} of $G$ is a collection of vertices $C = \{v_1,\ldots,v_t\}$ such that $e \cap C \neq \emptyset$ for any $e\in E(G)$. A vertex cover is called \emph{minimal} if no proper subset of $C$ is again a vertex cover. The smallest  cardinality of minimal vertex covers of $G$ is called the \emph{covering number } of $G$ and we will denote it by $\tau(G)$.

\begin{lemma}\label{dim=codim}
Let $G$ be a graph without isolated vertices on $[2d]$ such that $\tau(G)=d$. Suppose that any vertex of $G$ belongs to a maximal independent set of cardinality $d$. Then $G$ admits a perfect matching. 
\end{lemma}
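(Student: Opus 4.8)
Since $\tau(G)=d$ and $|V(G)|=2d$, we have $\alpha(G)=2d-\tau(G)=d$, so a maximal independent set of cardinality $d$ is a maximum independent set, and the hypothesis says that every vertex of $G$ lies in a maximum independent set. Fix a maximum independent set $I$; then $|I|=|V\setminus I|=d=|V|/2$, and since $I$ is independent every edge of a perfect matching of $G$ has exactly one endpoint in $I$. Hence $G$ has a perfect matching if and only if the bipartite graph $H_I$ with parts $I$ and $V\setminus I$, whose edges are the edges of $G$ joining $I$ to $V\setminus I$, has one. In particular, if $V\setminus I$ is independent for \emph{some} maximum independent set $I$, then $G$ is bipartite with both sides of size $d$ and $\tau(G)=d$, so König's theorem gives $\nu(G)=d$ and we are done. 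The remaining work is for the non-bipartite case.

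\textbf{Reduction by edge deletion.} I would induct on $d$; the case $d=1$ is a single edge. For the inductive step it suffices to produce an edge $\{v,w\}$ such that (i) every maximum independent set of $G$ meets $\{v,w\}$, and (ii) $G-v-w$ has no isolated vertex. Indeed, set $G':=G-v-w$, a graph on $2(d-1)$ vertices. A minimum vertex cover of $G$ covers the edge $\{v,w\}$ and therefore restricts to a vertex cover of $G'$ of size $\le d-1$; on the other hand any maximum independent set of $G$ meets $\{v,w\}$ in at most one vertex, so one always has $\alpha(G')\ge d-1$, and (i) says exactly that $\alpha(G')\le d-1$. Hence $\tau(G')=d-1$. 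Finally, if $u\in V(G')$ lies in a maximum independent set $M$ of $G$, then $|M\cap\{v,w\}|=1$ (it is nonempty by (i) and cannot be all of $\{v,w\}$, which is an edge), so $M\setminus\{v,w\}$ is an independent set of $G'$ of size $d-1$ containing $u$, and it is maximal in $G'$ because $\alpha(G')=d-1$. Thus $G'$ again satisfies the hypotheses of the lemma, and a perfect matching of $G'$ together with $\{v,w\}$ is a perfect matching of $G$.

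\textbf{Reducing (i)+(ii) to (i).} Suppose some edge $\{v,w\}$ satisfies (i) but $G-v-w$ has an isolated vertex $u$, i.e. $\emptyset\neq N_G(u)\subseteq\{v,w\}$. If $N_G(u)=\{v,w\}$, then by (i) every maximum independent set contains $v$ or $w$, hence a neighbour of $u$, so $u$ lies in no maximum independent set, contradicting the hypothesis. If $N_G(u)=\{v\}$ (the case $\{w\}$ is symmetric), then the edge $\{u,v\}$ already satisfies (i) — a maximum independent set avoiding both $u$ and $v$ could be enlarged by $u$, contradicting $\alpha(G)=d$ — and $G-u-v$ has an isolated vertex only if $v$ has a second pendant neighbour $z$; but then, taking a maximum independent set $M\ni v$ and replacing $v$ by $\{u,z\}$, one gets an independent set $(M\setminus\{v\})\cup\{u,z\}$ of size $d+1$, again a contradiction. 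So once one edge satisfies (i), an edge satisfying both (i) and (ii) exists.

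\textbf{The main obstacle: producing an edge with (i).} What remains is to show that there is an edge $\{v,w\}$ with $\alpha(G-v-w)=d-1$; equivalently (note $\alpha(G-v-w)\in\{d-1,d\}$ always), an edge not both of whose endpoints can be avoided by a single maximum independent set. This is exactly the step that uses the full hypothesis: the graph $K_3\sqcup K_{1,2}$ has $\tau=d=3$, no isolated vertex, and no perfect matching, and it possesses no such edge — correspondingly, the centre of $K_{1,2}$ lies in no maximum independent set. For bipartite $G$ the edge is easy to find (take an edge of a König perfect matching). In general I would argue by contradiction: fix a minimum vertex cover $C$, look at the bipartite graph between $C$ and $V\setminus C$, pick a minimal Hall-violating set $T\subseteq V\setminus C$, and extract its rigid structure — the deficiency is exactly one, every vertex of $N_G(T)\cap C$ has at least two neighbours in $T$, and $G[T]$ contains an edge — and then play this against maximum independent sets passing through the vertices of $T$ and of $N_G(T)\cap C$, the goal being to force $T$ to be independent and thereby build an independent set of size $d+1$. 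Controlling the edges internal to $T$ is the delicate point, and is where I expect the real difficulty to be; a sufficiently careful extremal choice of $C$ and $T$ (and a use of the hypothesis also for vertices outside $T$) should close it.
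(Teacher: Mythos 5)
Your setup, the edge-deletion induction, and the passage from condition (i) alone to conditions (i)+(ii) are all correct, but the proof has a genuine gap at exactly the point you flag yourself: you never establish that an edge $\{v,w\}$ met by every maximum independent set actually exists. Everything else in the argument is conditional on this, and the sketch you give for it (fix a minimum cover $C$, take a minimal Hall-violating set $T$ in the bipartite graph between $C$ and $V\setminus C$, "force $T$ to be independent") is only a plan, not a proof; you explicitly say you expect the real difficulty to lie in controlling edges internal to $T$, and that difficulty is not resolved. Note also the circularity lurking nearby: once one knows $G$ has a perfect matching, every matching edge automatically satisfies (i) (a maximum independent set of size $d$ must hit each of the $d$ matching edges exactly once by pigeonhole), so the existence of such an edge is essentially equivalent to the statement you are trying to prove and cannot be asserted lightly.

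The paper takes a different and more direct route, which avoids the edge-by-edge induction entirely. It fixes a minimum cover $C=\{v_1,\dots,v_d\}$ and the complementary maximum independent set $F=V\setminus C$, covers $C$ by finitely many maximum independent sets $H_1,\dots,H_k$, and sets $C_j=C\cap H_j$. Maximality of $H_j$ forces $F\cap N(C_j)=F\setminus H_j$, hence $|F\cap N(C_j)|=|C_j|$; and minimality of $C$ forces the reverse inequality $|\widetilde{C_j}|\ge|F\cap N(\widetilde{C_j})|$ for the overlaps $\widetilde{C_j}=C_j\cap\bigcup_{p<j}C_p$. Combining these yields a partition of $V$ into pairs of blocks $A_j\subseteq C$, $B_j\subseteq F$ of equal size, each of which carries a bipartite subgraph with covering number $|A_j|$, so K\"onig's theorem gives a perfect matching on each block, and these glue to a perfect matching of $G$. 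If you want to salvage your inductive strategy, you would need to turn your Hall-violating-set sketch into an actual contradiction; as written, the crucial existence claim is unproved.
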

\begin{proof}
Let $C=\{v_1,\ldots ,v_d\}\subseteq V(G)$ be a minimal vertex cover of cardinality $d$.  Notice that for any $i=1,\ldots ,  d$ there exists a maximal independent set $H$, of cardinality $d$, such that $v_i \in H$. So there exist $k\leq d$ maximal independent sets $H_1,\ldots ,H_k$ of cardinality $d$, such that 
$$ C\subseteq \bigcup_{j=1}^k H_j.$$ 
Set $F=V(G)\setminus C$. By  definition $F$ is a maximal independent set of $G$ of cardinality $d$. For any $j=1,\ldots ,  k$, set $C_j=C\cap H_j$. Notice that $|F\cap N(C_j)|=|C_j|$ for any $j=1,\ldots ,  k$. In fact, since $H_j$ is a maximal independent set, it is easy to show that $F\cap N(C_j)=F\setminus H_j$, so 
\[|F\cap N(C_j)|=|F\setminus H_j|=|F|-|F\cap H_j|=d-(d-|C_j|)=|C_j|.\]
For any $j=1,\ldots ,  k$, set $\displaystyle A_j = C_j \setminus (\bigcup_{p=1}^{j-1}C_p)$ and  $\displaystyle B_j = (F\cap N(C_j))\setminus (\bigcup_{p=1}^{j-1}(F\cap N(C_p)))$.

{\it Claim 1}. For any $j=1, \ldots ,k$ we have $|A_j|=|B_j|$.\\
Set $\widetilde{C_j}=C_j \cap (\bigcup_{p=1}^{j-1}C_p)$. If we had $|\widetilde{C_j}|<|F\cap N(\widetilde{C_j})|$,
then $(C\setminus \widetilde{C_j})\cup (F\cap N(\widetilde{C_j}))$
would be a vertex cover of cardinality less than $d$. Thus 
\[|\widetilde{C_j}| \geq|F\cap N(\widetilde{C_j})|.\]
Putting everything together we obtain
\[ |B_j| = |F\cap N(C_j)| - |F\cap N(\widetilde{C_j})|\leq |C_j| - |\widetilde{C_j}|= |A_j|. \]
But then $d=\sum_{j=1}^k |B_j| \leq \sum_{j=1}^k |A_j|=d$,
from which we get the claim.

For any $j=1,\ldots ,  k$ let $G^j$ denote the subgraph of $G$ induced by  $\bigcup_{p=1}^j (A_p\cup B_p)$.

{\it Claim 2}. For any $j=1,\ldots ,  k$ the graph $G^j$ has a perfect matching.\\
We will prove Claim 2 by induction. Notice that $G^1$ is a bipartite graph with bipartition 
\[C_1\cup (F\cap N(C_1)).\]
The covering number of $G^1$ is $|C_1|=|F\cap N(C_1)|$. In fact, if $C'$ were a vertex cover of $G^1$ of cardinality less than $|C_1|$, then $C'\cup (C\setminus C_1)$ would be a vertex cover of $G$ of cardinality less than $d$, a contradiction. Therefore $G^1$ has a perfect matching by K\"onig's theorem (\cite[Theorem 1.1.1]{LP}).

Assume that $G^{j-1}$ has a perfect matching. Consider the bipartite subgraph of $G$ induced on the vertices of $C_j \cup (F\cap N(C_j))$. As above, by K\"onig's theorem,  it has a perfect matching. Moreover, such a perfect matching restricts to a perfect matching of the subgraph of $G$ induced by $A_j\cup B_j$, since 
\[F \cap N(\widetilde{C_j})\subseteq B_j.\]
So we can extend the perfect matching of $G^{j-1}$ to a perfect matching of $G^j$.
\end{proof}

Before we state the next theorem we recall a graph theoretical notion from \cite{CV}. An edge $e$ of a graph $G$ is called {\it right edge} if $|C\cap e|=1$ for any minimal vertex cover $C$ of $G$. By the paper of the second author with Benedetti \cite{BV}, $e=\{i,j\}$ is right if and only if 
$\forall~\{i,i'\},\{j,j'\}\in E(G) \Rightarrow \{i',j'\} \in E(G)$.
Finally, recall that $G$ satisfies the {\it weak square condition} if every  vertex of $G$ belongs to a right edge.

\begin{thm}\label{structure}
Let $\Delta=\Delta(G)$ be a $(d-1)$-dimensional flag simplicial complex on $[2d]$ without cone points. The following are equivalent:
\begin{compactenum}
\item $G$ has a perfect matching of right edges, $\{\{u_1,v_1\},\ldots ,\{u_d,v_d\}\}$, such that $\{u_1,\ldots ,u_d\}$ is an independent set and if $\{u_i,v_j\}$ is an edge of $G$ then $i\leq j$. 
\item $\D$ is strongly connected.
\item $\D$ is Cohen-Macaulay over any field.
\item $G$ has a unique perfect matching and it is unmixed.
\item $\D$ is vertex decomposable.
\end{compactenum}
\end{thm}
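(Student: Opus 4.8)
The plan is to prove the cycle $(1)\Rightarrow(5)\Rightarrow(3)\Rightarrow(2)\Rightarrow(4)\Rightarrow(1)$. Two of the links are general facts recalled in Section~2: $(5)\Rightarrow(3)$ is that a vertex decomposable complex is Cohen-Macaulay over every field, and $(3)\Rightarrow(2)$ is the classical fact that a Cohen-Macaulay complex is pure and connected in codimension one (see \cite{St,BH}). For graphs with $\tau(G)=\tfrac12|V(G)|$ the equivalence of $(1)$, $(3)$ and $(5)$ is essentially due to \cite{CRT} and \cite{CV}, so the new content is to bring $(2)$ and $(4)$ into the list; accordingly I would put most weight on $(2)\Rightarrow(4)$ and $(4)\Rightarrow(1)$, reproving $(1)\Rightarrow(5)$ for completeness.

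I would prove $(1)\Rightarrow(5)$ by induction on $d$. First, $\D(G)$ is pure, since a minimal vertex cover meets each of the $d$ right edges of the matching in exactly one vertex and hence has cardinality $d$. The triangular condition forces $N_G(u_d)=\{v_d\}$, so $u_d$ is isolated in $G\setminus v_d$ and lies in every facet of $\D\setminus v_d$, whereas it is not even a vertex of $\lk_\D v_d$; thus $v_d$ satisfies the shedding condition. Now $\D\setminus v_d$ is the cone with apex $u_d$ over $\D(G')$, where $G'=G\setminus\{u_d,v_d\}$ again satisfies hypothesis~(1) with $d-1$ in place of $d$ (independence, the triangular condition and right-edgeness pass to induced subgraphs, and the matching shows $G'$ has no isolated vertex and independence number $d-1$), so $\D\setminus v_d$ is vertex decomposable by induction. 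For the link one uses the right-edge hypothesis essentially: whenever exactly one endpoint of a matching edge $e_i$ lies in $N_G(v_d)$, the right-edge property of $e_i$ forces every surviving neighbour of the other endpoint to be adjacent to $v_d$, so that endpoint becomes isolated in $G\setminus N_G[v_d]$; hence $\lk_\D v_d=\D(G\setminus N_G[v_d])$ is an iterated cone over $\D(G'')$, where $G''$ is the induced subgraph on those matching pairs lying in $[2d]\setminus N_G[v_d]$, and since $G''$ again satisfies~(1) the link is vertex decomposable by induction.

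For $(2)\Rightarrow(4)$: because $\dim\D=d-1$ we have $\tau(G)=d$, and since $\D$ is strongly connected it is in particular pure, so $G$ is unmixed and every vertex of $G$ lies in a maximal independent set of size $d$; Lemma~\ref{dim=codim} then produces a perfect matching $M$. If a second perfect matching existed, the two would differ along an $M$-alternating cycle $c_1,c_2,\ldots,c_{2m}$ with $m\ge2$; writing $e_{j_k}=\{c_{2k-1},c_{2k}\}\in M$, the non-matching edges $\{c_{2k},c_{2k+1}\}$ force any facet that picks the endpoint $c_{2k}$ on $e_{j_k}$ to pick $c_{2k+2}$ on $e_{j_{k+1}}$, so running around the cycle every facet falls into one of two classes, and facets adjacent in the facet--ridge graph stay in the same class; since $c_1$ and $c_2$ each lie in some facet, both classes are nonempty and the facet--ridge graph is disconnected, contradicting strong connectivity. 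Finally, for $(4)\Rightarrow(1)$: every edge of the unique perfect matching $M$ is a right edge, for otherwise a maximal independent set containing the two non-adjacent vertices that witness the failure would, having size $d$, miss that matching edge, contradicting unmixedness; choosing in each $e_i$ the endpoint $u_i$ lying in a fixed maximal independent set makes $\{u_1,\ldots,u_d\}$ independent; and uniqueness of $M$ means that the digraph on the indices with an arc $i\to j$ ($i\ne j$) whenever $\{u_i,v_j\}\in E(G)$ is acyclic---a directed cycle would reassemble into an $M$-alternating cycle---so a topological sort of it relabels the $e_i$ so that $\{u_i,v_j\}\in E(G)$ implies $i\le j$. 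The step I expect to require the most care is the analysis of $\lk_\D v_d$ in $(1)\Rightarrow(5)$: pinning down exactly why the right-edge condition has to appear in $(1)$.
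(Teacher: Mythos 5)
Your proof is correct, but it takes a genuinely more self-contained route than the paper does. The paper's strategy is to observe that the equivalence $(1)\Leftrightarrow(2)\Leftrightarrow(3)\Leftrightarrow(4)$ is already available from \cite[Theorem~4.7]{CV} for graphs satisfying the weak square condition, and the real technical content of Theorem~\ref{structure} there is to show that under any of the four hypotheses the weak square condition is \emph{automatic}: each of $(1)$--$(4)$ forces $\D$ to be pure, purity and the absence of cone points make Lemma~\ref{dim=codim} applicable, and the resulting perfect matching consists of right edges because every minimal vertex cover (of size $d$) must meet each of the $d$ disjoint matching edges in exactly one vertex. With that done, only $(5)\Rightarrow(3)$ (standard) and $(1)\Rightarrow(5)$ (induction, with essentially the argument you give) remain. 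You instead close the cycle $(1)\Rightarrow(5)\Rightarrow(3)\Rightarrow(2)\Rightarrow(4)\Rightarrow(1)$ by directly re-proving the two nontrivial links $(2)\Rightarrow(4)$ and $(4)\Rightarrow(1)$. Your $(2)\Rightarrow(4)$ argument (two perfect matchings differ along an $M$-alternating cycle; since facets pick exactly one endpoint from each matching edge, the non-matching edges of the cycle sort facets into two classes differing in $\ge 4$ vertices, so the facet--ridge graph would be disconnected) and your $(4)\Rightarrow(1)$ argument (failure of right-edgeness would give a facet missing a matching edge entirely, contradicting unmixedness; and a directed cycle in the index digraph would reassemble into an $M$-alternating cycle, contradicting uniqueness, so a topological sort gives the triangular order) are both sound, and together they essentially re-derive the relevant portion of \cite[Theorem~4.7]{CV}. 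Both approaches hinge on Lemma~\ref{dim=codim}. The trade-off is transparency versus economy: your version is fully self-contained but longer; the paper's is shorter because it delegates to \cite{CV} and isolates the new insight, namely that the weak square condition comes for free in this setting. Your analysis of $\lk_\D v_d$ in $(1)\Rightarrow(5)$ matches the paper's: the right-edge property of $e_i$ is exactly what makes the surviving endpoint of a half-deleted matching edge isolated in $G\setminus N[v_d]$.
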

\begin{proof}The equivalence of the first four points is known from \cite[Theorem 4.7]{CV} for graphs that satisfy the weak square condition. So we only need to check that every vertex of $G$ belongs to a right edge.
Each of the first four properties implies  that $\D$ is pure. In particular any vertex of $G$ belongs to an independent set of cardinality $d$. So by Lemma \ref{dim=codim} $G$ has a perfect matching, say $\{e_1,\ldots ,e_d\}\subseteq E(G)$. Since $\D$ is pure of dimension $d-1$, for any minimal vertex cover $C\subseteq V(G)$ we have $|C\cap e_i|=1$ for any $i=1,\ldots ,  d$. This means that $G$ satisfies the weak square condition, so \cite[Theorem 4.7]{CV} implies that  the properties (1.), (2.), (3.) and (4.) are equivalent. 

Since a vertex decomposable simplicial complex is always CM, (5.) $\Rightarrow$ (3.) follows.
We will argue by induction on $d$ to prove that (1.) $\Rightarrow$ (5.).  If $d=1$ it is trivial, since any $0$-dimensional simplicial complex is vertex decomposable. 

Therefore consider $d\geq 2$. Clearly $v_d$ is a shedding vertex of $\D$, and $\D \setminus v_d$ and $\link_{\D}v_d$ are flag simplicial complexes. Precisely they are $\D \setminus v_d=\D(G_1)$ and $\link_{\D}v_d=\D(G_2)$ where $G_1$ is the subgraph of $G$ induced on the set of vertices $V(G)\setminus \{v_d\}$ and $G_2$ is the subgraph of $G$ induced on the set of vertices $V(G)\setminus N[v_d]$. Notice that $\D \setminus v_d$ is a $(d-1)$-dimensional simplicial complex as well as $\D$. Clearly the graph $G_1^{red}$ obtained from $G_1$ after removing its  (unique) isolated vertex, is a graph on $2(d-1)$ vertices such that (1.) is easily seen holding true. So $\D(G_1^{red})$ is vertex decomposable by induction, and since $\D \setminus v_d$ is obtained from $\D(G_1^{red})$ adding some cone points, it is vertex decomposable too. We want to show that (1.) holds true also for $G_2^{red}$. To see this, assume that $u_i$ is not a vertex of $G_2$ for some $i<d$. Then, using the fact that $\{u_i,v_i\}$ is right, it is easy to see that $v_i$ is an isolated vertex in $G_2$. Analogously, if $v_i$ is not a vertex of $G_2$ then $u_i$ is an isolated vertex of $G_2$. Hence the perfect matching of $G$ induces  a perfect matching on $G_2^{red}$. At this point it is easy to see that (1.) holds true for $G_2^{red}$, so using the above argument $\link_{\D}v_d$ is vertex decomposable by induction. Therefore $\D$ is vertex decomposable. 
\end{proof}

We conclude this section with a useful remark.
Let $A=S/J$ an Artinian $K$-algebra. We will say that $A$ is a {\it quadratic Artinian $K$-algebra} if $J$ is generated by quadrics, and that $A$ is a {\it monomial Artinian $K$-algebra} if $J$ is generated by monomials.  
\begin{remark}\label{artinian}
Let $\D$ be a simplicial complex on $[n]$. Construct the ideal
\[J_{\D}=I_{\D}+(x_1^2,\ldots ,x_n^2)\subseteq S.\]
It is straightforward to verify that $S/J_{\D}$ is a {\it monomial Artinian $K$-algebra} such that
\[h(S/J_{\D})=f(\D).\]
On the other hand, if $A=S/J$ is a monomial Artinian $K$-algebra such that $x_i^2\in J$ for any $i=1,\ldots ,  n$, then $J=I_{\D}+(x_1^2,\ldots ,x_n^2)$ for some simplicial complex $\D$ on $[n]$. Once again we have
\[h(A)=f(\D).\]
Therefore the set of  $h$-vectors of monomial Artinian $K$-algebras whose defining  ideal contains the square of each variable is equal to the set of $f$-vectors of simplicial complexes.
By the same argument,  characterizing the $f$-vectors of flag simplicial complexes is  equivalent to classifying the $h$-vectors of quadratic monomial Artinian $K$-algebras.
\end{remark}


\section{$h$-vectors of Vertex Decomposable Flag Simplicial Complexes}
In this section we are going to prove  Conjecture \ref{weakkalai}  when $\D$ is vertex decomposable. 
First of all, we want to remark that the inclusion in Conjecture \ref{EGHconj} is strict. To this aim let us take a look at the next example.
\begin{example}\label{triangle}
Consider the $f$-vector of the empty  triangle, $(1,3,3)$. If  a quadratic Artinian $K$-algebra with $h$-vector $(1,3,3)$ existed, then it would be of the kind: 
\[A=K[x,y,z]/(f_1,f_2,f_3),\]
where the $f_i$'s are degree $2$ homogeneous polynomials of $K[x,y,z]$. Since the ideal $(f_1,f_2,f_3)$ is a complete intersection, the $h$-vector of $A$ has to be symmetric -- a contradiction.
\end{example}

Before stating the main result of this section we will prove the following algebraic lemma.

\begin{lemma}\label{inequality $h$-vectors}
Let $A$ be a standard graded Noetherian $d$-dimensional Cohen-Macaulay $K$-algebra and $J\subseteq A$ a height $1$ ideal generated by elements of degree $1$ such that $A/J$ is Cohen-Macaulay. If $K$ is infinite, then for any $i\in \NN$
\[h_i(A/J)\leq h_i(A).\]  
\end{lemma}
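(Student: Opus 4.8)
The plan is to reduce to the case where $J$ is generated by a single degree-one element. Since $J$ is generated by elements of degree $1$ and has height $1$, and $K$ is infinite, a generic $K$-linear combination $\ell$ of the given generators will be a nonzerodivisor on $A$ (it avoids the finitely many associated primes of $A$, none of which contains $J$ since $\height J = 1 > 0$). Then $\ell$ is a regular element, and the key point is to compare $H_{A/J}$ with $H_{A/\ell A}$. Because $\ell$ is a nonzerodivisor on the $d$-dimensional Cohen--Macaulay ring $A$, the quotient $A/\ell A$ is $(d-1)$-dimensional Cohen--Macaulay, and from the short exact sequence $0 \to A(-1) \xrightarrow{\ell} A \to A/\ell A \to 0$ one gets $H_{A/\ell A}(t) = (1-t) H_A(t)$, so $h(A/\ell A) = h(A)$ as polynomials, i.e. $h_i(A/\ell A) = h_i(A)$ for all $i$.

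Next I would compare $A/J$ with $A/\ell A$. Since $\ell \in J$, there is a surjection $A/\ell A \twoheadrightarrow A/J$, hence a surjection of graded pieces in each degree, giving $\dim_K (A/J)_i \le \dim_K (A/\ell A)_i$ for all $i$; that is, the Hilbert function of $A/J$ is bounded above by that of $A/\ell A$ term by term. However, Hilbert-function inequalities do not immediately pass to $h$-vector inequalities in general. The clean way around this: both $A/J$ and $A/\ell A$ are Cohen--Macaulay of the same dimension $d-1$. Indeed $A/J$ is Cohen--Macaulay by hypothesis, and since $\height J = 1$ in the $d$-dimensional (equidimensional, being CM) ring $A$ we have $\dim A/J = d-1$. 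For a $(d-1)$-dimensional Cohen--Macaulay standard graded algebra over an infinite field, one may pick a common linear system of parameters: choose $d-1$ generic linear forms $\theta_1, \dots, \theta_{d-1}$ that form a regular sequence on \emph{both} $A/\ell A$ and $A/J$ simultaneously (possible since $K$ is infinite and we only need to avoid finitely many primes for each ring). Modding out, $h_i(A/J) = \dim_K (A/(J + (\theta_1,\dots,\theta_{d-1})))_i$ and likewise $h_i(A/\ell A) = \dim_K (A/(\ell A + (\theta_1,\dots,\theta_{d-1})))_i$, because passing to an Artinian reduction by a regular sequence of linear forms preserves the $h$-vector.

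Finally, since $\ell \in J$, we have $\ell A + (\theta_1,\dots,\theta_{d-1}) \subseteq J + (\theta_1,\dots,\theta_{d-1})$, so there is again a surjection between the two Artinian reductions in each degree, yielding
\[
h_i(A/J) = \dim_K \bigl(A/(J+(\theta_1,\dots,\theta_{d-1}))\bigr)_i \le \dim_K \bigl(A/(\ell A+(\theta_1,\dots,\theta_{d-1}))\bigr)_i = h_i(A/\ell A) = h_i(A),
\]
which is the desired inequality. The main obstacle — and the step requiring the most care — is the simultaneous-parameter argument: one must justify that a single generic choice of linear forms is a maximal regular sequence for both Cohen--Macaulay quotients at once, and that reducing a Cohen--Macaulay algebra modulo a linear regular sequence leaves the $h$-vector unchanged (this is the standard fact $H_{A/\theta A}(t) = (1-t)H_A(t)$ applied repeatedly). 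Everything else is bookkeeping with Hilbert series and surjections.
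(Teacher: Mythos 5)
Your proof is correct and follows the same strategy as the paper's: choose a degree-one $A$-regular element $\ell \in J$ (by prime avoidance over the infinite field), reduce both $A/\ell A$ and $A/J$ to Artinian quotients by a common linear regular sequence, and compare Hilbert functions via the surjection induced by $\ell A + (\theta_1,\dots,\theta_{d-1}) \subseteq J + (\theta_1,\dots,\theta_{d-1})$. The paper's only variation is that it extends $\ell$ to a full linear regular sequence $\ell, x_2, \ldots, x_d$ on $A$ itself and then notes that the images of $x_2,\dots,x_d$ in $A/J$ are automatically a regular sequence because they form a system of parameters and $A/J$ is Cohen--Macaulay, which sidesteps the separate ``simultaneous genericity'' appeal in your step.
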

\begin{proof}
By \cite[Proposition 1.5.12]{BH} we can choose  a degree $1$ homogeneous element $x\in J$ which is $A$-regular. Thus for any $i$ we have that $h_i(A/(x))=h_i(A)$. Moreover $A/(x)$ and $A/J$ have the same dimension.  Let us extend $x$ to a regular sequence for $A$ of degree $1$ elements, say $x,x_2,\ldots ,x_d$ where $d=\dim(A)$. It turns out that $x_2,\ldots ,x_d$ is a system of parameters for $A/J$. Because $A/J$ is Cohen-Macaulay,  $x_2,\ldots ,x_d$ is a regular sequence for $A/J$. So there is a graded surjection 
\[A/(x,x_2,\ldots ,x_d) \longrightarrow A/(J+(x_2,\ldots ,x_d)),\]
from which we get the desired inequality: 
\[ h_i(A/J) \leq h_i(A).\]
\end{proof}

We are ready to prove the main theorem of this section.

\begin{thm}\label{generalvd}
Let $\Delta$ be a vertex decomposable, flag simplicial complex. Then there exists a simplicial complex $\Gamma$ such that $f(\Gamma)=h(\Delta)$.
\end{thm}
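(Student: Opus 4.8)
The natural approach is induction on the number of vertices, using the vertex decomposition structure together with the face deletion/link decomposition $\D = (\D\setminus v)\ast_{\link_\D v}\langle v\rangle$ at a shedding vertex $v$. The base case (a simplex) is immediate, since the $h$-vector of a simplex is $(1)$, which is the $f$-vector of the void complex on no vertices (or one may start from $\D$ a single vertex). For the inductive step, I would pick a shedding vertex $v$; then both $\D\setminus v$ and $\link_\D v$ are vertex decomposable and flag (the latter because a link of a flag complex is flag, and a face deletion of a flag complex is flag), and each has fewer vertices, so by induction there are simplicial complexes $\Gamma_1,\Gamma_2$ with $f(\Gamma_1)=h(\D\setminus v)$ and $f(\Gamma_2)=h(\link_\D v)$. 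The task is then to combine $\Gamma_1$ and $\Gamma_2$ into a single $\Gamma$ whose $f$-vector is $h(\D)$.

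The link between the three $h$-vectors comes from a short exact sequence of Stanley--Reisner modules. Writing $K[\D]$, one has the standard exact sequence relating $K[\D]$, $K[\D\setminus v]$ and $K[\link_\D v]$ (the ideal quotient by $x_v$ gives $K[\link_\D v]$ up to a degree shift, and the quotient by that gives $K[\D\setminus v]$), which on Hilbert series reads $H_{K[\D]}(t) = H_{K[\D\setminus v]}(t) + t\,H_{K[\link_\D v]}(t)$. Since $v$ is a shedding vertex of a (pure) vertex decomposable complex, $\dim\D = \dim(\D\setminus v) = \dim(\link_\D v)+1$, so clearing the common denominator $(1-t)^d$ yields the clean relation $h_i(\D) = h_i(\D\setminus v) + h_{i-1}(\link_\D v)$ for all $i$. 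Here I would use Lemma \ref{inequality $h$-vectors} (or rather the purity consequences of vertex decomposability from Section 2, e.g. via Theorem \ref{structure}) only insofar as it guarantees the complexes in play are Cohen--Macaulay and the dimensions behave as stated, so that the Hilbert-series manipulation is valid with the correct exponent on $(1-t)$.

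Given $h_i(\D) = h_i(\D\setminus v) + h_{i-1}(\link_\D v)$, the combinatorial problem reduces to: given simplicial complexes $\Gamma_1$ and $\Gamma_2$, produce $\Gamma$ with $f_{i-1}(\Gamma) = f_{i-1}(\Gamma_1) + f_{i-2}(\Gamma_2)$ for every $i$. The obvious candidate is $\Gamma = \Gamma_1 \sqcup (w \ast \Gamma_2)$, i.e. the disjoint union of $\Gamma_1$ with the cone over $\Gamma_2$ by a fresh vertex $w$ (taking vertex sets disjoint): a cone over $\Gamma_2$ has exactly $f_{i-2}(\Gamma_2)$ new faces of dimension $i-1$ coming from $w\cup F$ with $F\in\Gamma_2$, plus the faces of $\Gamma_2$ itself — so one must be slightly careful that the $f_{i-1}(\Gamma_2)$ term from the cone does not produce an over-count. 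In fact $f_{i-1}(w\ast\Gamma_2) = f_{i-1}(\Gamma_2) + f_{i-2}(\Gamma_2)$, which is not what we want; the right gadget is instead to only keep the faces containing $w$ together with the empty face, or equivalently to observe that $h(\link_\D v)$ already has its own combinatorial realization and what we need is additivity, which is exactly achieved by a \emph{disjoint union after suspending/relabeling} — I expect the correct bookkeeping is $\Gamma = \Gamma_1 \sqcup \Gamma_2'$ where $\Gamma_2'$ is obtained from $\Gamma_2$ by adding one vertex to every face (a shift), realized as the subcomplex $\{\{w\}\cup F : F\in\Gamma_2\}\cup\{\emptyset\}$ of the cone. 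Nailing down this gadget so that the $f$-vector identity holds on the nose, and checking the edge/degenerate indices ($i=0$, top dimension), is the one genuinely fiddly point; everything else is bookkeeping. The main conceptual obstacle is thus not large — it is verifying that vertex decomposability is inherited so that the Hilbert-series identity holds with the uniform denominator $(1-t)^d$, i.e. that $v$ being a shedding vertex forces $\D\setminus v$ to have the same dimension as $\D$, which is built into the definition of vertex decomposable (purity), and then assembling the realizing complex as above.
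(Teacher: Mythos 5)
Your setup (induction on a shedding vertex, the recursion $h_i(\D)=h_i(\D\setminus v)+h_{i-1}(\link_\D v)$, and reduction to an $f$-vector combination problem) matches the paper, but the combination step has a genuine gap, and it is precisely the point you flag as ``fiddly'' that the paper's proof is designed to handle.

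The object $\Gamma_2' = \{\{w\}\cup F : F\in\Gamma_2\}\cup\{\emptyset\}$ is not a simplicial complex: it is not closed under taking subsets (for $F\in\Gamma_2$ with $|F|\ge 1$, the set $F$ itself is missing from $\Gamma_2'$). So $\Gamma_1\sqcup\Gamma_2'$ is not a simplicial complex either, and the $f$-vector bookkeeping you want cannot be realized by a free-standing disjoint union. The faces $\{w\}\cup F$ only live inside a simplicial complex if the lower faces $F'\subseteq F$ are supplied by something else — and the only way to supply them without disturbing the count is to have them already present in $\Gamma_1$. That is, one needs to build $\Gamma = \Gamma_1 \cup \{\{u\}\cup F : F\in\Gamma_2\}$ with $\Gamma_2$ a genuine subcomplex of $\Gamma_1$ (the paper's $\Gamma_1\ast_{\Gamma_2}\{u\}$). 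This forces two extra steps your proposal omits: (i) one must first prove $h_i(\link_\D v)\le h_i(\D\setminus v)$ for all $i$ — the paper does this via Lemma~\ref{inequality $h$-vectors}, using the CM presentation $K[\link_\D v]=K[\D\setminus v]/(x_i:\{i,v\}\notin\D)$ — so that $f_i(\Gamma_2)\le f_i(\Gamma_1)$; and (ii) one must then normalize $\Gamma_1,\Gamma_2$ to rev-lex complexes via Kruskal--Katona, since the componentwise inequality of $f$-vectors implies $\Gamma_2\subseteq\Gamma_1$ only after this normalization. Without (i) and (ii) the construction does not produce a simplicial complex, so the proposal as written does not close.
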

\begin{proof}
Suppose that $\D$ is $d$-dimensional on $[n]$.
If $\Delta$ is the $d$-simplex, then it is enough to choose  $\Gamma = \{\emptyset\}$.
 So we can assume that $\Delta$ is not a simplex  and use induction on $d$ and $n$.

Let $v$ be a shedding vertex of $\Delta$ such that $\Delta_1=\Delta \setminus \{v\}$ and $\Delta_2=\link_{\Delta}v$ are vertex decomposable simplicial complexes. We may assume $v=n$, so it turns out that $\Delta_1$ is of dimension $d$ on $[n-1]$, whereas $\Delta_2$ is of dimension $d-1$. For any $i=0,\ldots ,d$ we have
\[f_i(\Delta)=|\{\mbox{$i$-faces of $\Delta$ not containing $v$}\}|+|\{\mbox{$i$-faces of $\Delta$ containing $v$}\}|=f_i(\Delta_1)+f_{i-1}(\Delta_2).\]
Using \eqref{hvect} it is not difficult to show that the same formula holds at the $h$-vectors' level:
\[ h_i(\Delta)=h_i(\Delta_1)+h_{i-1}(\Delta_2) \mbox{ \ \ for every \ }i=1,\ldots ,  (d+1). \]
Before proceeding with the induction we will prove the following:

{\it Claim. For any $i$ we have $h_i(\Delta_2)\leq h_i(\Delta_1)$.} \\
By definition we have that 
\[ I_{\Delta_1}=(x_{i_1}x_{i_2} \ :  \ \{i_1,i_2\} \notin\Delta \mbox{ and }v\notin \{i_1,i_2\}), \]
\[I_{\Delta_2}=(x_{i_1}x_{i_2} \ :  \ \{i_1,i_2\} \notin\Delta , \ v\notin \{i_1,i_2\} \mbox{ and both $\{i_1,v\}, \{i_2,v\} \in \Delta$}).\]
Moreover $K[\Delta_1]=K[x_i : i\neq v]/I_{\Delta_1}$ and $K[\Delta_2]=K[x_i : i\neq v \mbox{ and }\{i,v\} \in\Delta]/I_{\Delta_2}$. Therefore
\[K[\Delta_2]=K[\Delta_1]/(x_i \ : \ \{i,v\} \notin \Delta).\]
Since  $\Delta_1$ and $\D_2$ are  vertex decomposable, $K[\Delta_1]$ and $K[\D_2]$ are Cohen-Macaulay. So we are in the situation of Lemma \ref{inequality $h$-vectors}. Hence
\[h_i(\Delta_2)=h_i(K[\Delta_2])\leq h_i(K[\Delta_1])=h_i(\Delta_1),\]
and the claim follows.

By induction there exist two simplicial complexes, $\Gamma_1$ and $\Gamma_2$, such that $f(\Gamma_1)=h(\Delta_1)$ and $f(\Gamma_2)=h(\Delta_2)$. We want to construct the desired simplicial complex $\Gamma$ starting from them. 
By the Kruskal-Katona theorem (for instance see \cite[Theorem 2.1]{St}) we can assume that both $\Gamma_1$ and $\Gamma_2$ are rev-lex complexes. Therefore, since by the claim $f_i(\Gamma_2)\leq f_i(\Gamma_1)$, actually $\Gamma_2$ is a subcomplex of $\Gamma_1$. So it makes sense to construct the simplicial complex 
\[\Gamma = \Gamma_1 *_{\Gamma_2} \{u\},\]
where $u$ is a new vertex. It is straightforward to check that 
\[f_i(\Gamma)=f_i(\Gamma_1)+f_{i-1}(\Gamma_2)=h_i(\Delta_1)+h_{i-1}(\Delta_2)=h_i(\Delta).\]
\end{proof}

The reader might think at this point that $h$-vectors of quadratic Artinian $K$-algebras are $h$-vectors of Cohen-Macaulay flag simplicial complexes. The following example will show that this is not the case.

\begin{example}\label{balanced}
Let $h=(1,4,5,1)$ be a sequence of integers (notice that $h$ is the $f$-vector of a balanced simplicial complex). 
In the paper of Roos \cite{Ro} we found  the quadratic Artinian $K$- algebra $A=K[x_1,x_2,x_3,x_4]/I$, where $I$ is the ideal
\[I=(x_1x_2+x_3^2, \ x_1x_4, \ x_1^2+x_3^2+x_4^2, \ x_2^2, \ x_2x_3+x_3x_4),\]
with  $h(A)=(1,4,5,1)$.

If there existed a Cohen-Macaulay flag simplicial complex $\D$ with $h(\D)=h$, then there would exist an Artinian Koszul $K$-algebra $B$ with $h(B)=h$. In fact, if $\theta=\theta_1,\ldots ,\theta_d$ is a system of parameters for $K[\D]$, it is enough to take  $B=K[\D]/(\theta)$. This follows from  the theorem of Fr\"oberg \cite{Fr1} and the result of Backelin and Fr\"oberg \cite[Theorem 4]{BF}. This implies that
\[\displaystyle \frac{1}{1-4z+5z^2-z^3}=\sum_{i\geq 0}\dim_K(\Tor_i^B(K,K))z^i,\]
(for instance see \cite[p. 87]{BF}). Computing the coefficients on the left hand side we obtain $\dim_K(\Tor_9^B(K,K))=-174$, obviously a contradiction.
\end{example}

In  light of Examples \ref{triangle} and \ref{balanced}, we conclude this section discussing whether the simplicial complex $\Gamma$ of Theorem \ref{generalvd} could be chosen with some extra properties. First of all we have a remark.
\begin{remark}\label{alternativeflag}
It is easy to see that the following holds true: A simplicial complex $\D$ on $[n]$ is flag if and only if $\D = \{\emptyset\}$ or there exists a vertex $v$ of $\D$ such that $\D\setminus v$ is flag and $\lk_{\D}v=\D_{W}$ for some $W\subseteq [n]$ (in particular $\lk_{\D}v$ is flag). 
\end{remark}
Let $\Gamma_2 \subseteq \Gamma_1$ be two  simplicial complexes, with $\G_2 =(\Gamma_1)_W$ induced by a subset of vertices $W\subseteq [n]$. Then
\[\displaystyle \frac{K[x_i \ : \ i\in W]}{J_{\Gamma_2}}\cong \frac{S}{J_{\Gamma_1}+(x_j \ : \ j\notin W)},\]
where $J_{\Gamma_1}$ and $J_{\Gamma_2}$ are the ideals defined in Remark \ref{artinian}. Therefore
\[\displaystyle f(\Gamma_2)=h\left(\frac{S}{J_{\Gamma_1}+(x_j \ : \ j\notin W)}\right).\]
Thus we are in the situation in which there exists a monomial Artinian  $K$-algebra $A$ and an ideal $I\subseteq A$ generated by variables such that
\[f(\Gamma_1)=h(A) \mbox{ \ and \ }f(\Gamma_2)=h(A/I).\]
Moreover, if $A$ is quadratic, then by Remark \ref{alternativeflag} the complex $\Gamma_1*_{\Gamma_2}\{v\}$ is  flag.
In the proof of Theorem \ref{generalvd} we have that $K[\Delta_2]=K[\Delta_1]/I$, where $I$ is an ideal generated by variables. 
Since $K[\D_1]$ and $K[\D_2]$ are both Cohen-Macaulay, going modulo a generic regular sequence,
we could  restrict  to the Artinian case.
The problem is that the quadratic Artinian reduction $A$ of $K[\D_1]$ is not necessarily  monomial. This is why, even assuming that $\Gamma_1$ and $\Gamma_2$ are flag, we could not  conclude that  $\Gamma_1*_{\Gamma_2}\{v\}$ is also flag. In other words,  if in the proof of Theorem \ref{generalvd} we assume by induction that  $\Gamma_1$ and $\Gamma_2$ are flag, we  do not see how to construct a flag simplicial complex $\Gamma$,  because $\Gamma_2$ might not be a subcomplex of $\Gamma_1$ induced by some set of vertices.
However,  the behavior of the $f$-vector of $\Gamma_2$ is similar to that of the $f$-vector of an induced  subcomplex of  $\Gamma_1$. For instance, if $f_0(\Gamma_2)=f_0(\Gamma_1)$, it follows by the proof of Theorem \ref{generalvd} that $f_i(\Gamma_2)=f_i(\Gamma_1)$ for any $i$. In the next section we present more precise results in this direction under the assumption that $\D$ is also balanced (see Definition \ref{quasi-flag} and Theorem \ref{balanced-quasi-flag}). 

\section{Balanced, Vertex Decomposable,  Flag  Complexes}

The reason for which we study balanced, vertex decomposable, flag simplicial complexes comes from the  Proposition \ref{fflaghbvd}. We conjecture that the converse of this proposition is true.
In Theorem \ref{balanced-quasi-flag} we will prove a weaker version of the equality in  Conjecture \ref{conjalexvabba}. Finally we will prove that the conjecture holds for balanced, vertex decomposable, flag $(d-1)$-dimensional simplicial complexes on $[2d]$, without cone points.

\begin{prop}\label{fflaghbvd}
Let $\Gamma$ be a flag simplicial complex. Then there exists a balanced, vertex decomposable, flag simplicial complex $\D$ such that $h(\D)=f(\Gamma)$.
\end{prop}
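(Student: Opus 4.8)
The plan is to build $\D$ explicitly from $\Gamma$ by a ``doubling'' construction and then check balancedness, flagness, vertex decomposability, and the identity $h(\D)=f(\Gamma)$ separately. Suppose $\Gamma$ is a flag complex on vertex set $[m]$, so $I_\Gamma=I(H)$ for some graph $H$ on $[m]$. The idea is to introduce, for each vertex $i\in[m]$, a new ``partner'' vertex $i'$, and to let $G_\D$ be the graph on $[m]\sqcup[m]'$ whose edges are: all the edges $\{i,j\}$ of $H$, all the ``doubled'' copies $\{i',j'\}$ of those edges, and all the ``matching'' edges $\{i,i'\}$. Equivalently, $G_\D$ is obtained from $H$ by duplicating each vertex and adding a perfect matching between each vertex and its clone. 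Set $\D=\D(G_\D)$. The perfect matching $\{\{i,i'\}:i\in[m]\}$ is then the natural candidate to play the role of the matching in Theorem \ref{structure}.

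First I would verify the combinatorial data. Colour $[m]\sqcup[m]'$ with two colours by $\col(i)=1$, $\col(i')=2$ for all $i$; since every edge of $G_\D$ joins a vertex of colour $1$ (or $2$) to a vertex whose index is matched or adjacent, one checks that the independent sets of $G_\D$ never contain two vertices of the same colour only after passing to the appropriate refinement—so in fact one should use the colouring induced by an ordering, giving $\col(i)=i$ and $\col(i')$ chosen so that matched pairs and edges get distinct colours; the cleanest choice is to $2$-colour within each ``layer'' if $H$ is bipartite, but in general one needs $d$ colours where $d=\dim\D+1=m$. The key point is that $\{i,i'\}$ is a right edge for every $i$ (the duplication property $\{i,a\},\{i',b\}\in E(G_\D)\Rightarrow\{a,b\}\in E(G_\D)$ holds by construction), so $G_\D$ satisfies the weak square condition and the perfect matching consists of right edges; ordering the matching appropriately and noting $\{1,\ldots,m\}$ (the ``top'' layer, isomorphic to the complement of $H$'s... ) is independent in $G_\D$ iff it is a clique in $H$—which need not hold—so instead the independent set in condition (1) of Theorem \ref{structure} should be taken as the layer $[m]$ of the matching where $u_i=i$, requiring $[m]$ independent in $G_\D$; this forces me to instead orient the doubling so that $u_i=i'$ and $v_i=i$ with edges $\{i,j\}$ of $H$ contributing $\{v_i,v_j\}$, $\{i',j'\}$ contributing $\{u_i,u_j\}$—then $\{u_1,\ldots,u_m\}=[m]'$ is independent iff $H$ has no edges, again false. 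The resolution: condition (1) of Theorem \ref{structure} only needs *one* layer independent, and I should take $H$'s complement—but $\Gamma$ flag does not make $\bar H$ edgeless. So the honest route is to prove vertex decomposability directly by induction rather than through Theorem \ref{structure}.

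Thus I would prove vertex decomposability of $\D$ by induction on $m$, taking $v=1'$ (the clone of vertex $1$) as shedding vertex: $\D\setminus 1'$ is the independence complex of $G_\D$ minus $1'$, which after removing the now-isolated-behaviour of... is handled by the inductive structure, while $\lk_\D 1'=\D(G_\D\setminus N[1'])$ and $N[1']=\{1\}\cup\{j':\{1,j\}\in E(H)\}\cup\{1'\}$, so the link is the doubling construction applied to $H\setminus N_H[1]=\link_\Gamma 1$ (flag, on fewer vertices), plus the non-neighbours of $1$ in the top layer as extra vertices—so both $\D\setminus 1'$ and $\lk_\D 1'$ are again of the required form on strictly fewer vertices, hence vertex decomposable by induction, and the shedding condition is immediate because $\D$ is pure (each facet is a maximal independent set, and the matching forces all of them to have size $m$). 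Flagness is automatic since $\D=\D(G_\D)$. For the $h$-vector, use Remark \ref{artinian}: since $\{\theta_i\}$ is a regular sequence (CM, balanced), $h(\D)=h(K[\D]/(\theta))$, and one computes that modding out by $\theta_i=x_i+x_{i'}$ identifies $x_{i'}\equiv -x_i$, collapsing $K[\D]/(\theta)$ onto $S'/(I_\Gamma+(x_i^2:i\in[m]))$ up to sign twists, whose $h$-vector is $f(\Gamma)$ by Remark \ref{artinian}; one must check the doubled edges $\{i',j'\}$ and matched edges $\{i,i'\}$ die or become $x_ix_j$ under this substitution, which is a short monomial computation.

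The main obstacle I expect is pinning down the balanced colouring compatibly with purity and with the substitution computing the $h$-vector: the naive $2$-colouring is wrong whenever $H$ has an edge, so the colouring must be the ``diagonal'' one with $m$ colours $\{i,i'\}\mapsto i$, and then one must verify this really is a proper colouring of $G_\D$ (no monochromatic edge) and that $\theta_i=x_i+x_{i'}$ is exactly the canonical l.s.o.p.\ for *that* colouring—after which the regular-sequence reduction and the sign bookkeeping in the last paragraph go through cleanly.
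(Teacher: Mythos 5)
Your ``doubling'' construction is not the paper's construction, and it is in fact wrong. You place a copy of the nonface graph $H$ on \emph{both} layers, i.e.\ $E(G_\D)$ contains $\{i,j\}$ and $\{i',j'\}$ for every edge $\{i,j\}$ of $H$, plus the matching $\{i,i'\}$. Take $\Gamma=\{\emptyset,\{1\},\{2\}\}$ (two isolated vertices, so $H$ is a single edge): your $G_\D$ is the $4$-cycle $1-2-2'-1'-1$, and $\D(G_\D)$ consists of the two disjoint edges $\{1,2'\}$ and $\{2,1'\}$, which is disconnected, hence not Cohen-Macaulay and certainly not vertex decomposable. Consequently $\theta_i=x_i+x_{i'}$ is not a regular sequence, so the substitution $x_{i'}\mapsto -x_i$ does not compute the $h$-vector; indeed $h(\D)=(1,2,-1)\neq(1,2)=f(\Gamma)$. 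The induction you sketch at the end cannot repair this, because the complex itself is wrong, not just the route to proving it vertex decomposable.

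The fix is exactly the point you circle around and then abandon: put $H$ on only \emph{one} layer, so the other layer is independent. Concretely, the paper takes $G$ on $\{u_1,\dots,u_n,v_1,\dots,v_n\}$ with edges $\{u_i,v_i\}$ for all $i$ and $\{v_i,v_j\}$ for each nonface $\{i,j\}$ of $\Gamma$, and \emph{no} edges among the $u_i$'s. This is precisely the polarization of $J_\Gamma=I_\Gamma+(x_1^2,\dots,x_n^2)$: the square-free generators of $I_\Gamma$ polarize to themselves (landing in the $v$-layer), while each $x_i^2$ polarizes to $x_iy_i$ (the matching). Polarization preserves height and graded Betti numbers, hence $h(P/J_\Gamma^{\mathrm{pol}})=h(S/J_\Gamma)=f(\Gamma)$ by Remark~\ref{artinian}, with no need to exhibit a regular sequence by hand. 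Since all generators are quadratic the associated complex is flag, condition~(1) of Theorem~\ref{structure} is satisfied with $\{u_1,\dots,u_n\}$ the independent layer and $\{u_i,v_i\}$ the right edges, so vertex decomposability follows from Theorem~\ref{structure} directly, and $\col(u_i)=\col(v_i)=i$ gives the balanced colouring. This is where your attempt to use $[m]$ or $[m]'$ as the independent layer necessarily failed: with edges duplicated on both layers, neither layer is independent unless $H$ is edgeless.
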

\begin{proof}
Set $n=\dim \Gamma +1$ and as in Remark \ref{artinian} consider the ideal 
\[J_{\Gamma}=I_{\Gamma}+(x_1^2,\ldots ,x_n^2)\subseteq S.\]
Consider the polarization of $J_{\Gamma}$:
\[J_{\Gamma}^{pol}=I_{\Gamma}+(x_1y_1,\ldots ,x_ny_n)\subseteq P=K[x_1,\ldots ,x_n,y_1,\ldots ,y_n].\]
Since polarization is a particular distraction, it preserves the height and the graded Betti numbers (see the paper of Bigatti, Conca and Robbiano \cite{BCR}). Particularly 
\[h(P/J_{\Gamma}^{pol})=h(S/J_{\Gamma})=f(\Gamma),\]
where the last equality follows from Remark \ref{artinian}. The simplicial complex $\D$ associated to $J_{\Gamma}^{pol}$ is flag. More precisely $\D=\D(G)$ where $G$ is the graph on the vertices $\{u_1,\ldots ,u_n,v_1,\ldots ,v_n\}$ whose edges are $\{u_i,v_i\}$ for $i=1,\ldots ,  n$ and $\{v_i,v_j\}$ such that $\{i,j\}$ is not a face of $\Gamma$. Then by Theorem \ref{structure} $\D$ is vertex decomposable. Moreover $\D$ is easily seen to be balanced setting $\col(u_i)=\col(v_i)=i$ for any $i=1,\ldots ,  n$.
\end{proof}

We conjecture that the converse of Proposition \ref{fflaghbvd} is also true:

\vspace{2mm}

\noindent {\bf Conjecture} \ref{conjalexvabba}. The following equality holds:
\begin{equation}
\!\! \left \{ \! \!
	\begin{array}{c}
	\hbox{$h$-vectors of vertex decomposable} \\
	\hbox{balanced and flag simplicial complexes}
	\end{array}
\right \}
=
\left \{ 
	\begin{array}{c}
	\hbox{$f$-vectors of}  \\
 	\hbox{flag simplicial complexes}
	\end{array}
\!\!\!\right \}. \nonumber
\end{equation}
 Next, we are going to prove a result in support of the above conjecture. This next theorem will be a version of Conjecture \ref{conjalexvabba},  in which we will prove that the hard inclusion ($\subseteq$) holds with weakened conditions on the right hand side of the equality. In Theorem \ref{dimcodimb} and in the two lemmas of Section 6  we will prove that equality holds when adding some stronger conditions on the left hand side.
First we need to define a new class of simplicial complexes, suggested  by Remark \ref{alternativeflag}.

\begin{definition}\label{quasi-flag}
Let $\D$ be a simplicial complex on $[n]$. Then $\D$ is \emph{ quasi-flag} if and only if $n=0$ or there exists a vertex $v$ of $\D$ such that 
\begin{compactenum}
\item$\D \setminus v$ has the $f$-vector of a quasi-flag simplicial complex, 
\item$\lk_{\D}v=\D_W$ for some $W\subseteq [n]$ and the $f$-vector of $\lk_{\D}v$ is that of a quasi-flag simplicial complex.
\end{compactenum}
\end{definition}
\begin{thm}\label{balanced-quasi-flag}
Let $\Delta$ be a balanced, vertex decomposable, flag simplicial complex on $[n]$. Then there exists a quasi-flag simplicial complex $\Gamma$ such that $f(\Gamma)=h(\Delta)$.
\end{thm}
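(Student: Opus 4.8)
The plan is to follow closely the inductive strategy of Theorem \ref{generalvd}, but to keep track of the extra structure needed to produce a \emph{quasi-flag} complex rather than an arbitrary one. So I would again proceed by induction on $n$ (and on the dimension $d$ of $\D$). If $\D$ is a simplex, take $\Gamma=\{\emptyset\}$, which is quasi-flag. Otherwise, since $\D$ is balanced and vertex decomposable, pick a shedding vertex $v$; by the Remark following the definition of vertex decomposability the color classes $V_i$ are uniquely determined, and one checks that $\D_1=\D\setminus v$ and $\D_2=\lk_\D v$ are again balanced, vertex decomposable, flag simplicial complexes (for $\D_2$ this uses the flag hypothesis via Remark \ref{alternativeflag}, which gives $\lk_\D v=\D_W$ for a suitable vertex set $W$). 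As in the proof of Theorem \ref{generalvd} we have the relations $f_i(\D)=f_i(\D_1)+f_{i-1}(\D_2)$, hence $h_i(\D)=h_i(\D_1)+h_{i-1}(\D_2)$, and the Claim there gives $h_i(\D_2)\le h_i(\D_1)$ for all $i$.

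By induction there are quasi-flag complexes $\Gamma_1,\Gamma_2$ with $f(\Gamma_1)=h(\D_1)$ and $f(\Gamma_2)=h(\D_2)$. The key point — and this is what distinguishes the argument from that of Theorem \ref{generalvd} — is that I must realize $\Gamma_2$ as an \emph{induced} subcomplex of $\Gamma_1$, not merely as an abstract subcomplex with smaller $f$-vector, so that $\Gamma:=\Gamma_1\ast_{\Gamma_2}\{u\}$ is quasi-flag by Definition \ref{quasi-flag}. Here I would exploit the fact, pointed out in the discussion after Theorem \ref{generalvd}, that $K[\D_2]=K[\D_1]/I$ with $I$ generated by a subset of the variables, so that $\Gamma_2$ behaves like an induced subcomplex of $\Gamma_1$; concretely, the variables killed are exactly the $x_i$ with $\{i,v\}\notin\D$. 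The plan is to choose the quasi-flag realizations compatibly: strengthen the induction hypothesis so that whenever $K[\D_2]=K[\D_1]/(x_j : j\in T)$ for a set $T$ of variables, one may take $\Gamma_2=(\Gamma_1)_{W'}$ for the corresponding vertex subset $W'$. Since the defining relation $\Gamma_2\subseteq\Gamma_1$ in the proof of Proposition \ref{fflaghbvd}-type constructions is through deleting vertices, and Definition \ref{quasi-flag} is precisely built to be stable under this operation, $\Gamma=\Gamma_1\ast_{\Gamma_2}\{u\}$ then satisfies the two conditions of Definition \ref{quasi-flag} with $u$ as the distinguished vertex: $\Gamma\setminus u=\Gamma_1$ is quasi-flag, and $\lk_\Gamma u=\Gamma_2=(\Gamma_1)_{W'}$ is an induced subcomplex whose $f$-vector is that of a quasi-flag complex.

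Finally one computes
\[
f_i(\Gamma)=f_i(\Gamma_1)+f_{i-1}(\Gamma_2)=h_i(\D_1)+h_{i-1}(\D_2)=h_i(\D),
\]
which is the desired conclusion. The main obstacle is exactly the compatibility issue described above: in the bare statement of Theorem \ref{generalvd} one invokes Kruskal--Katona to replace $\Gamma_1,\Gamma_2$ by rev-lex complexes and thereby force $\Gamma_2\subseteq\Gamma_1$, but a rev-lex subcomplex need not be \emph{induced}, so that shortcut is unavailable here. I expect the real work to be in setting up an induction hypothesis strong enough to carry along the "induced subcomplex" bookkeeping — i.e. tracking not just that $f(\Gamma_i)=h(\D_i)$ but that the inclusion $K[\D_2]\hookleftarrow$ (quotient by variables) is mirrored by an honest vertex-induced inclusion $\Gamma_2=(\Gamma_1)_{W'}$ — and in verifying that this property is preserved when one passes from $\D$ to $\D_1$ and $\D_2$ at the inductive step, using that $\D$ is balanced so that the coloring (hence the relevant vertex sets) is canonical.
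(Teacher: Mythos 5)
You correctly identify the crux of the matter: to make $\Gamma=\Gamma_1 *_{\Gamma_2}\{u\}$ quasi-flag you need $\Gamma_2$ to be a vertex-\emph{induced} subcomplex of $\Gamma_1$, and the Kruskal--Katona shortcut used in Theorem \ref{generalvd} only produces an abstract inclusion, not an induced one. But you then leave the key step unresolved --- ``I expect the real work to be in setting up an induction hypothesis strong enough to carry along the induced-subcomplex bookkeeping'' --- which is precisely the content that a proof must supply. As written, there is a genuine gap: you propose strengthening the induction hypothesis but neither state the strengthened hypothesis precisely nor verify that it propagates, and it is not at all clear that such a strengthening could be made to close.

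The paper's resolution is different in kind and worth understanding. It does \emph{not} strengthen the induction hypothesis. Instead, at each step it constructs a fresh compatible pair $(\Gamma_1,\Gamma_2)$ with $\Gamma_2=(\Gamma_1)_W$ by an Artinian reduction and a Gr\"obner-basis trick. Concretely: reduce $K[\D_1]$ modulo the canonical balanced linear system of parameters $\theta_i=\sum_{j\in V_i,\,j\ne n}x_j$ to obtain an Artinian ring $A=K[x_{d+1},\dots,x_{n-1}]/I$ with $x_i^2\in I$ for all $i$; observe that the corresponding reduction of $K[\D_2]$ is $A/(x_i : i\in W)$. Now pass to initial ideals: $\LT(I)$ is the defining ideal $J_{\Gamma_1}$ of a simplicial complex $\Gamma_1$, and if the term order is deg-rev-lex with the variables indexed by $W$ smallest, then by \cite[Prop.~15.12]{Ei} one has $\LT(I+(x_i:i\in W))=\LT(I)+(x_i:i\in W)$, so that $f(\Gamma_1)=h(\D_1)$ and $f((\Gamma_1)_W)=h(\D_2)$ with $(\Gamma_1)_W$ genuinely induced. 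Then the ordinary induction hypothesis (nothing strengthened) gives that $h(\D_1)$ and $h(\D_2)$ are $f$-vectors of quasi-flag complexes, and since Definition \ref{quasi-flag} only constrains $f$-vectors --- not the complexes $\D\setminus v$ and $\lk_\D v$ themselves --- the star $\Gamma_1*_{(\Gamma_1)_W}\{u\}$ is quasi-flag. This is the idea your argument is missing; without it, the proof does not go through.
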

\begin{proof}
If $\D$ is a simplex then we can choose  $\Gamma = \{\emptyset\}$. If $\D$ is not a simplex we can choose a shedding vertex $v$ such  that $\Delta_1=\Delta \setminus \{v\}$ and $\Delta_2=\link_{\Delta}v$ are vertex decomposable, flag simplicial complexes. As in the proof of Theorem \ref{generalvd}, we have
\[K[\Delta_2]=K[\Delta_1]/(x_i:i\in W),\]
where $W=\{i:\{i,v\} \notin \Delta\}$. Let $\col:[n]\rightarrow [d]$ be a $d$-coloring of $\D$, where $\dim \D = d-1$. For any $j=1,\ldots ,  d$ we set $V_j=\{i\in [n]:\col(i)=j\}$. We can assume that $v=n\in V_d$. Notice that the coloring on $\D$ induces a $d$-coloring on $\D_1$ and a $(d-1)$-coloring on $\D_2$, so that $\D_1$ and $\D_2$ are both balanced. So we have the following  system of parameters for $K[\D_1]$:
\[\theta_i=\sum_{{j\in V_i}\atop{j\neq n}}x_j, \ \ \ \ i=1,\ldots ,  d.\]
It turns out that $\theta_i$, where $i=1,\ldots ,  (d-1)$, provides also a system of parameters for $K[\Delta_2]$. Note that $\theta_d$ is zero in $K[\D_2]$. We may assume that $i\in V_i$ for any $i=1,\ldots ,  d$ and that $i\notin W$ for any $i=1,\ldots ,  (d-1)$. Consider the ideal of $K[x_{d+1},\ldots ,x_{n-1}]$:
\[I=(x_ix_j, \ x_i(\sum_{{k\in V_h}\atop{k\neq h}}x_k) \ : \ d+1\leq i,j\leq n-1, \ h=1,\ldots ,  d, \mbox{ and } \{i,j\}, \{i,h\}\notin \Delta_1 ).\]
Going modulo the $\theta_i$'s, it is easy to see that
\[\displaystyle \frac{K[\Delta_1]}{(\theta_1,\ldots ,\theta_d)}\cong \frac{K[x_{d+1},\ldots ,x_{n-1}]}{I}=A.\]
Moreover
\[\displaystyle \frac{K[\Delta_2]}{(\theta_1,\ldots ,\theta_d)} = \frac{K[\Delta_2]}{(\theta_1,\ldots ,\theta_{d-1})} \cong \frac{A}{(x_i:i\in W)}=B.\]
Since $\D_1$ and $\D_2$ are both Cohen-Macaulay, 
\[h(\D_1)=h(A) \mbox{ \ and \ }h(\D_2)=h(B).\]
Notice that $x_i^2\in I$ for any $i=d+1,\ldots ,n-1$. So for any term-order $\prec$ in $K[x_{d+1},\ldots ,x_{n-1}]$ there exists a simplicial complex $\Gamma_1$ such that
\[\LT(I)=J_{\Gamma_1}.\]
If we consider as $\prec$ a deg-rev-lex term-order such that the smallest variables are the $x_i$'s with $i\in W$,  we have
\[\LT(I+(x_i:i\in W))=J_{\Gamma_1}+(x_i:i\in W),\]
see for instance the book of Eisenbud \cite[Proposition 15.12]{Ei}.
By the above discussion we have $f(\Gamma_1)=h(\D_1)$ and $f((\Gamma_1)_W)=h(\D_2)$. By induction $\Gamma_1$ and $\Gamma_2=(\Gamma_1)_W$ have both the $f$-vector of quasi-flag simplicial complexes. So
\[\Gamma = \Gamma_1 *_{\Gamma_2} \{u\},\]
where $u$ is a new vertex, is a quasi-flag simplicial complex. As in the proof of Theorem \ref{generalvd} we have $f(\Gamma)=h(\D)$, thus we conclude (notice that since $\Gamma_2$ is already contained in $\Gamma_1$ this time we need not use the Kruskal-Katona theorem).
\end{proof}
\begin{remark}
By Remark \ref{alternativeflag} the flag simplicial complexes are quasi-flag. 
However notice that not all the $f$-vectors are $f$-vectors of quasi-flag simplicial complexes. For instance take $f=(1,n,\binom{n}{2})$. The unique simplicial complex with such an $f$-vector is the complete graph $K_n$. However the link of any vertex of $K_n$ is not a subcomplex of $K_n$ induced by a set of vertices. Thus $K_n$ is not quasi-flag.

Another example is also provided by the $f$-vector $(1,4,5,1)$. The unique simplicial complex $\D$ which has such an $f$-vector is the one whose set of facets is
\[\mathcal{F}(\D)=\{\{1,2\},\{1,3\},\{2,3,4\}\}.\]
The unique vertex $v$ such that $\lk_{\D}v$ is an induced subcomplex of $\D$ is  $4$. However the $f$-vector of $\D \setminus 4$ is $(1,3,3)$, which is not the $f$-vector of a quasi-flag simplicial complex by the above considerations. Therefore $\D$ is not quasi-flag.

We are not aware of any example of quasi-flag simplicial complex whose $f$-vector is not flag.
\end{remark}

Some evidence in favor of Conjecture \ref{conjalexvabba} is also provided by the following theorem.
\begin{thm}\label{dimcodimb}
The following equality holds true:
\begin{equation}
\!\! \left \{ \! \!
	\begin{array}{c}
	h(\D) \ : \ \hbox{$\D$ is a $(d-1)$-dimensional} \\
	\hbox{balanced, vertex decomposable, flag}  \\
 	\hbox{simplicial complex on $[2d]$, without cone points}
	\end{array}
\right \}
=
\left \{ 
	\begin{array}{c}
	f(\Gamma) \ : \ \Gamma \hbox{ is a flag simplicial}  \\
 	\hbox{complex on $[d]$}
	\end{array}
\!\!\!\right \} .\nonumber
\end{equation}
\end{thm}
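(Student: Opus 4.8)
The plan is to prove the two inclusions separately, since the inclusion $\supseteq$ is essentially recorded already in Proposition~\ref{fflaghbvd}, whereas $\subseteq$ requires a small amount of combinatorial bookkeeping about the coloring. For $\supseteq$, start with a flag simplicial complex $\Gamma$ on $[d]$ and apply the construction of Proposition~\ref{fflaghbvd}: it produces the flag complex $\D=\D(G)$ on the $2d$ vertices $\{u_1,\dots,u_d,v_1,\dots,v_d\}$ with $E(G)=\{\{u_i,v_i\}:i\in[d]\}\cup\{\{v_i,v_j\}:\{i,j\}\notin\Gamma\}$, and that proposition already guarantees that $\D$ is balanced, vertex decomposable and flag with $h(\D)=f(\Gamma)$. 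It remains only to check that $\D$ is $(d-1)$-dimensional with no cone points. The dimension follows because polarization preserves height, so $\height I_\D=\height J_\Gamma=d$ and hence $\dim\D=2d-d-1=d-1$; consequently any independent set of cardinality $d$ in $G$ is a facet of $\D$. Then $\{u_1,\dots,u_d\}$ is such a set, omitting every $v_i$, while for each $i$ the set $\{u_1,\dots,u_{i-1},v_i,u_{i+1},\dots,u_d\}$ is again independent in $G$ and omits $u_i$; so no vertex lies in all facets.

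For the reverse inclusion, let $\D=\D(G)$ be a $(d-1)$-dimensional balanced, vertex decomposable, flag complex on $[2d]$ without cone points. Since $\D$ is Cohen-Macaulay, $\tau(G)=\height I_\D=2d-d=d$, and the absence of cone points means $G$ has no isolated vertices; hence Theorem~\ref{structure} applies and, in particular, $G$ has a \emph{unique} perfect matching, which by point~(1) of that theorem may be labelled $M=\{\{u_1,v_1\},\dots,\{u_d,v_d\}\}$ with $\{u_1,\dots,u_d\}$ independent. The first real step is to identify the color classes $V_1,\dots,V_d$ of $\D$ with the edges of $M$. Indeed a facet of $\D$ has cardinality $d$ and, being a face, contains no two equally-colored vertices, so it uses each of the $d$ colors exactly once; hence a color class of cardinality one would be contained in every facet, i.e.\ a cone point, which is excluded. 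As $\sum_i|V_i|=2d$ this forces $|V_i|=2$ for every $i$. On the other hand, since $\D$ is flag, any two vertices of the same color form a minimal nonface, hence an edge of $G$; therefore $\{V_1,\dots,V_d\}$ is a perfect matching of $G$ and, by uniqueness, equals $M$. After relabelling colors we may assume $\col(u_i)=\col(v_i)=i$ for all $i$.

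Now pass to the Artinian reduction of $K[\D]$ by the canonical linear system of parameters $\theta_i=x_{u_i}+x_{v_i}$ of the balanced complex. Since $K[\D]$ is Cohen-Macaulay, $\theta_1,\dots,\theta_d$ is a regular sequence and $h(\D)=h\bigl(K[\D]/(\theta_1,\dots,\theta_d)\bigr)$. Killing the $\theta_i$ identifies $x_{u_i}$ with $-x_{v_i}$, so $K[\D]/(\theta_1,\dots,\theta_d)\cong K[y_1,\dots,y_d]/\bar I$, where $y_i$ is the image of $x_{v_i}$ and $\bar I$ is generated by the images of the squarefree quadratic generators $\texttt{x}_e$ ($e\in E(G)$) of $I_\D$. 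Each edge $e$ joins two vertices of colors, say, $i$ and $j$, so $\texttt{x}_e$ maps to $\pm y_iy_j$ --- a monomial; and $y_i^2\in\bar I$ because $\{u_i,v_i\}\in E(G)$. Therefore $\bar I=J_\Gamma$ in the sense of Remark~\ref{artinian} for the flag complex $\Gamma=\D(G')$ on $[d]$, where $\{i,j\}\in E(G')$ exactly when $G$ has an edge between $V_i$ and $V_j$. By Remark~\ref{artinian}, $f(\Gamma)=h(K[y_1,\dots,y_d]/J_\Gamma)=h(\D)$, which completes the proof.

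The only genuinely delicate point, and the one I expect to cost the most care, is the identification of the color classes with the unique perfect matching: both the availability of the canonical l.s.o.p.\ and, above all, the fact that the resulting Artinian reduction is a \emph{monomial} ideal on exactly $d$ variables hinge on it. The balanced hypothesis enters precisely here: if one only had a general Cohen-Macaulay flag complex as in Theorem~\ref{generalvd}, one would instead have to reduce modulo a generic linear system of parameters, and then the Artinian reduction need not be monomial --- the obstruction already flagged after Remark~\ref{alternativeflag}.
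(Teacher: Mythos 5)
Your proof is correct and follows essentially the same route as the paper's: reduce modulo the canonical balanced linear system of parameters $\theta_i=x_{u_i}+x_{v_i}$ attached to the perfect matching from Theorem~\ref{structure}, observe that the Artinian quotient becomes a quadratic \emph{monomial} algebra in $d$ variables containing all $y_i^2$, and conclude via Remark~\ref{artinian}. The main added value of your write-up is that it makes explicit two points the paper leaves implicit: for $\supseteq$, the verification that the polarized complex $\D(G)$ really is $(d-1)$-dimensional on $[2d]$ without cone points; and, for $\subseteq$, the argument that the color classes must coincide with the unique perfect matching (cardinality-one classes would be cone points, so all $|V_i|=2$; same-colored vertices in a flag complex form an edge of $G$; uniqueness of the matching from Theorem~\ref{structure}(4) pins it down). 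The paper instead just invokes Stanley's Proposition 4.3 with $\theta_i=x_i+y_i$; this is justified because the matching itself furnishes a valid $d$-coloring, but your explicit identification of the matching with \emph{the} color classes is a clean way to close the gap and also makes visible that balancedness is essentially forced by the remaining hypotheses.
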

\begin{proof}
It is easy to see that the proof of Proposition \ref{fflaghbvd} yields that the set on the right hand side is a subset of the one on the left. For the other inclusion
let $\{\{u_1,v_1\},\ldots ,\{u_d,v_d\}\}$ be the perfect matching of $G=G_{\D}$ described in point (1.) of Theorem \ref{structure}. Also denote by
\[P=K[x_1,\ldots ,x_d,y_1,\ldots ,y_d]\] 
 the polynomial ring containing $I_{\D}$, where $x_i$ is the variable associated to $u_i$ and $y_i$ the one associated to $v_i$. 
Notice that  $\D$ is balanced, so by \cite[Proposition 4.3]{St} the set
$$\{\theta_i=x_i+y_i \ : \ i=1,\ldots ,  d\}$$ 
is a system of parameters for $K[\D].$
Thus we have  the graded isomorphism
\[\displaystyle \frac{K[\D]}{(\theta_1,\ldots ,\theta_d)}\longrightarrow \frac{K[z_1,\ldots ,z_d]}{(z_i^2, \ z_hz_k \ : \ i=1,\ldots ,  d, \ \{v_h,v_k\} \mbox{ or } \{u_h,v_k\} \mbox{ is an edge})}\]
which maps $y_i$ to $z_i$ and $x_i$ to $-z_i$. Since $\D$ is Cohen-Macaulay over $K$ we have
\[\displaystyle h\left( \frac{K[\D]}{(\theta_1,\ldots ,\theta_d)}\right)=h(\D).\]
So, by the above graded isomorphism, we have
\[h(\D)=\displaystyle h\left(\frac{K[z_1,\ldots ,z_d]}{(z_i^2,z_hz_k \ : \ i=1,\ldots ,  d, \ \{v_h,v_k\} \mbox{ or } \{u_h,v_k\} \mbox{ is an edge})}\right).\]
Using Remark \ref{artinian} we obtain the desired conclusion.
\end{proof}

\section{$h$-vectors of Cohen-Macaulay  Flag Complexes}

In this section we are going to discuss a natural generalization of Conjecture  \ref{conjalexvabba}, namely:

\vspace{2mm}

\noindent {\bf Conjecture} \ref{conjalexvabba1}
The following equality holds true:
\begin{equation}
\!\! \left \{ \! \!
	\begin{array}{c}
	\hbox{$h$-vectors of Cohen-Macaulay,} \\
	\hbox{ flag simplicial complexes}
	\end{array}
\right \}
=
\left \{ 
	\begin{array}{c}
	\hbox{$f$-vectors of}  \\
 	\hbox{flag simplicial complexes}
	\end{array}
\!\!\!\right \}. \nonumber
\end{equation}
One reason for the above conjecture is given by the following remark.
\begin{remark}
 Conjecture \ref{conjalexvabba1} implies Kalai's Conjecture \ref{kalai}. 
\end{remark}
\begin{proof}If $\D$ is a $d$-dimensional, CM, flag simplicial complex then, if Conjecture \ref{conjalexvabba1} were true, there would exist a $s$-dimensional,  flag simplicial complex $\Gamma'$ with $f(\Gamma')=h(\D)$, where  $s\leq d$. By \cite{Fr} there exists also  a \mbox{$s$-dimensional}, balanced simplicial complex $\Gamma''$, with $f(\Gamma'')=f(\Gamma')$. By a result of Bj\"orner, Frankl and Stanley \cite[Theorem 1]{BFS}, there exists a $s$-dimensional CM, balanced simplicial complex $\Gamma'''$ with $h(\Gamma''')=f(\Gamma'')$. Thus $h(\Gamma''')=h(\D)$. Adding $d-s$ cone points to $\Gamma'''$ we get a $d$-dimensional simplicial complex $\Gamma$ that is still CM and balanced. Furthermore $h(\Gamma)=h(\Gamma''')=h(\D)$. Since $\dim \Gamma = \dim \D$, we get $f(\Gamma)=f(\D)$. 
\end{proof}

The set on the right hand side of the equality in Conjecture \ref{conjalexvabba1} is contained in the one on the left by Proposition \ref{fflaghbvd}. So the hard part of the conjecture  is  to prove that  for any Cohen-Macaulay, flag simplicial complex $\D$ there exists a flag simplicial complex $\Gamma$ with $f(\Gamma)=h(\D)$. 

First of all,  notice that as an easy consequence of a more general theorem of Conca, Trung and Valla (\cite{CTV}), we obtain the validity of Conjecture \ref{conjalexvabba1} when the $h$-vector of $\D$ is \lq\lq short enough\rq\rq. Here is the precise statement:

\begin{prop}\label{short}
Let $\D$ be a Cohen-Macaulay, flag simplicial complex with $h$-vector $(1,n,m)$. Then there exists a flag simplicial complex $\Gamma$ with $f(\Gamma)=h(\D)$.
\end{prop}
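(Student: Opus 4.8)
The statement asserts that if $\D$ is Cohen-Macaulay and flag with $h$-vector $(1,n,m)$, then $(1,n,m)$ is the $f$-vector of some flag simplicial complex $\Gamma$. The plan is to pass to an Artinian reduction and exploit the fact that a quadratic algebra with Hilbert function beginning $(1,n,m)$ has very restricted behaviour, using the theorem of Conca, Trung and Valla on the ideals generated by quadrics with a given Hilbert function up to degree two. Concretely, since $\D$ is Cohen-Macaulay and flag, we may mod out a linear system of parameters (after extending $K$ to be infinite, which does not change the $h$-vector) to obtain an Artinian quadratic $K$-algebra $A = R/J$ with $h(A) = (1,n,m)$, where $R = K[x_1,\dots,x_n]$ and $J$ is generated by quadrics. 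The $f$-vector of a flag complex on $n$ vertices with $f_1 = m$ is exactly $(1,n,m)$ as long as $m \le \binom{n}{2}$; so the only thing to check is the numerical constraint relating $n$ and $m$ that the quadratic Artinian hypothesis forces, namely that $\dim_K J_2 = \binom{n+1}{2} - m$ quadrics can actually cut the Hilbert function of a polynomial ring in $n$ variables from $\binom{n+1}{2}$ down to $m$ in degree two while leaving $h_1 = n$ untouched.

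\textbf{Key steps, in order.} First, reduce to the Artinian quadratic case as above; record that $h_1 = n$ forces $J$ to contain no linear forms, so $J$ is generated in degree $\ge 2$, and being quadratic, $J = (J_2)$ with $\dim_K J_2 = \binom{n+1}{2} - m$. Second, invoke the Conca--Trung--Valla result \cite{CTV}: among all homogeneous ideals with a prescribed Hilbert function in degrees $0,1,2$, the lex-segment (or a suitable monomial) ideal is extremal, and in particular if \emph{some} quadratic ideal achieves $h_2 = m$ then the monomial quadratic ideal obtained by taking the first $\binom{n+1}{2}-m$ monomials of degree two in a chosen term order also achieves it — hence there is a \emph{monomial} quadratic Artinian algebra with $h$-vector $(1,n,m)$. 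Here one must be slightly careful: a bare lex-segment of quadrics might not contain all the $x_i^2$, so $\Gamma$ as produced by Remark \ref{artinian} might not literally be $I_\Gamma + (x_i^2)$; but by Remark \ref{artinian}, a monomial quadratic Artinian algebra whose defining ideal contains all $x_i^2$ corresponds to the Stanley--Reisner ideal of a flag complex, and one checks that whenever $(1,n,m)$ is the $h$-vector of \emph{any} quadratic monomial Artinian algebra it is also the $h$-vector of one containing every $x_i^2$ — simply add those squares that are missing, which can only decrease $h_2$, then observe that the extremal/lex bound shows $m$ was already small enough that the complete list of "non-square" quadratic monomials is not forced, so one can always arrange $\binom{n}{2} - m'$ of the off-diagonal monomials and all $n$ squares for the correct $m' = m$. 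Third, translate the resulting flag complex on $[n]$ with $f_1 = m$ back: its $f$-vector is $(1,n,m)$, which equals $h(\D)$, completing the proof.

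\textbf{Main obstacle.} The delicate point is the passage from "there exists a quadratic Artinian algebra with $h$-vector $(1,n,m)$" (which is immediate, since $\D$ itself yields one) to "there exists a quadratic \emph{monomial} Artinian algebra with that $h$-vector containing all the $x_i^2$" — this is exactly the $(1,n,m)$ case of the Eisenbud--Green--Harris conjecture, and it is where \cite{CTV} does the real work. I expect the argument to run: by \cite{CTV} the initial ideal with respect to a generic coordinate change and a term order has the same Hilbert function and is a monomial ideal generated by $\binom{n+1}{2}-m$ quadratic monomials; among those monomials, if fewer than $n$ are squares, one replaces enough off-diagonal monomials by the missing squares $x_i^2$ and verifies (a short monomial-combinatorics check, using $m \le \binom n2$, i.e. that the dimension count leaves room) that the Hilbert function in degree $2$ is unchanged, after which Remark \ref{artinian} applies verbatim. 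The only genuinely nontrivial input is thus \cite{CTV}; everything else is bookkeeping with Remark \ref{artinian} and the Kruskal--Katona-free observation that $(1,n,m)$ with $m\le\binom n2$ is trivially a flag $f$-vector.
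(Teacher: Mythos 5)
Your proof has a genuine gap, and it is precisely at the point you flag as the ``main obstacle.'' You treat the target as showing that $(1,n,m)$ is the $f$-vector of a simplicial complex on $n$ vertices with $m$ edges, for which the Kruskal--Katona constraint $m \le \binom{n}{2}$ suffices. But the statement asks for a \emph{flag} simplicial complex $\Gamma$ with $f(\Gamma)=(1,n,m)$, and in particular $f_2(\Gamma)=0$. For a flag complex, $f_2=0$ means the graph of non-edges (equivalently, the complement of the graph $G_\Gamma$ of minimal nonfaces) is triangle-free. By Mantel's theorem the correct necessary and sufficient condition is therefore $m \le \lfloor n^2/4\rfloor$, which is strictly stronger than $m\le\binom{n}{2}$. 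The paper's own Example~\ref{triangle} makes the gap concrete: $(1,3,3)$ satisfies $3 \le \binom{3}{2}$, yet no flag complex has $f$-vector $(1,3,3)$ (the complete graph $K_3$, as a flag complex, carries the $2$-face $\{1,2,3\}$). So ``$m\le \binom{n}{2}$ is trivially a flag $f$-vector'' is false, and nothing in your argument produces the stronger bound.

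Relatedly, your route via a lex-segment monomial ideal is essentially an attempt to prove the $(1,n,m)$ case of the Eisenbud--Green--Harris conjecture, but even a full proof of EGH would only give $m \le \binom{n}{2}$ (via Kruskal--Katona), which, as just noted, is not enough; the paper explicitly warns that Conjecture~\ref{EGHconj} is weaker than what is needed here. The ingredient you are missing is Koszulness: $K[\D]$ is Koszul by Fr\"oberg \cite{Fr1} because $I_\D$ is a quadratic monomial ideal, the Artinian reduction $A=K[\D]/(\theta_1,\dots,\theta_d)$ is Koszul by Backelin--Fr\"oberg \cite{BF}, and then \cite[Theorem~3.1]{CTV} applied to the Koszul Artinian algebra $A$ with $h$-vector $(1,n,m)$ yields precisely $m\le n^2/4$. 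With that bound one takes $\Gamma$ to be (the flag complex of) a bipartite graph on $n$ vertices with $m$ edges — no passage to a monomial ideal, no ``adding the missing squares,'' and no appeal to lex segments is needed. You have also misremembered what \cite{CTV} is being used for: the paper invokes it for the Koszul $h$-vector inequality, not for a statement about initial ideals or generic coordinate changes preserving the Hilbert function.
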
 
\begin{proof}
The $K$-algebra $K[\D]$ is Koszul by  \cite{Fr1}. Taking a regular sequence $\theta_1,\ldots ,\theta_d$, where $d-1=\dim \D$, we get that $A=K[\D]/(\theta_1,\ldots ,\theta_d)$ is a Koszul Artinian $K$-algebra by \cite[Theorem 4]{BF}. Since $h(A)=h(\D)=(1,n,m)$, we have $m\leq n^2/4$ by \cite[Theorem 3.1]{CTV}. Under this condition it is easy to construct a bipartite graph with $n$ vertices and $m$ edges. Such a bipartite graph can also be seen as  a $1$-dimensional, flag simplicial complex with $f$-vector $(1,n,m)$. 
\end{proof}
In particular the above proposition implies that Conjecture \ref{conjalexvabba1} is true when the dimension of $\D$ is $1$.
The following theorem brings more evidence in favor of Conjecture \ref{conjalexvabba1}.

\begin{thm}\label{dimcodim}
The following equality holds true:
\begin{equation}
\!\! \left \{ \! \!
	\begin{array}{c}
	h(\D) \ : \ \hbox{$\D$ is a $(d-1)$-dimensional, CM, flag} \\
 	\hbox{simplicial complex on $[2d]$, without cone points}
	\end{array}
\right \}
=
\left \{ 
	\begin{array}{c}
	f(\Gamma) \ : \ \Gamma \hbox{ is a flag }  \\
 	\hbox{simplicial complex on $[d]$ }
	\end{array}
\!\!\!\right \} .\nonumber
\end{equation}
\end{thm}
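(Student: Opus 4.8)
The plan is to prove the two inclusions separately, reducing the stronger theorem (Theorem \ref{dimcodimb}) to the present one via the equivalences in Theorem \ref{structure}. For the inclusion $\supseteq$, I would reuse the construction in the proof of Proposition \ref{fflaghbvd}: given a flag simplicial complex $\Gamma$ on $[d]$, form the graph $G$ on $\{u_1,\dots,u_d,v_1,\dots,v_d\}$ with edges $\{u_i,v_i\}$ for all $i$ together with $\{v_i,v_j\}$ whenever $\{i,j\}\notin\Gamma$. The associated flag complex $\D=\D(G)$ lives on $[2d]$, has no cone points, is $(d-1)$-dimensional, and by Theorem \ref{structure} is vertex decomposable, hence Cohen-Macaulay over any field; moreover $h(\D)=f(\Gamma)$ by the polarization argument already spelled out. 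So the set on the right is contained in the set on the left.

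For the hard inclusion $\subseteq$, let $\D$ be a $(d-1)$-dimensional Cohen-Macaulay flag complex on $[2d]$ without cone points, with defining graph $G=G_\D$. Since $K[\D]$ is Cohen-Macaulay of Krull dimension $d$ on $2d$ vertices, the covering number satisfies $\tau(G)=d$, and purity forces every vertex of $G$ to lie in a maximal independent set of size $d$; thus Lemma \ref{dim=codim} applies and $G$ has a perfect matching. As in the proof of Theorem \ref{structure}, the edges of this perfect matching are right edges, so $G$ satisfies the weak square condition, and by Theorem \ref{structure} (condition (1.)) we may choose a perfect matching $\{\{u_1,v_1\},\dots,\{u_d,v_d\}\}$ with $\{u_1,\dots,u_d\}$ independent and $\{u_i,v_j\}\in E(G)\Rightarrow i\le j$. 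Now I would follow the Artinian-reduction computation of Theorem \ref{dimcodimb} verbatim: the elements $\theta_i=x_i+y_i$ (with $x_i\leftrightarrow u_i$, $y_i\leftrightarrow v_i$) form a linear system of parameters — here one uses that $\{u_1,\dots,u_d\}$ is an independent set together with $\tau(G)=d$, rather than balancedness, to see that modding out the $\theta_i$ kills all variables down to $z_1,\dots,z_d$ — yielding a graded isomorphism
\[
\frac{K[\D]}{(\theta_1,\dots,\theta_d)}\;\cong\;\frac{K[z_1,\dots,z_d]}{(z_i^2,\; z_hz_k\ :\ i=1,\dots,d,\ \{v_h,v_k\}\text{ or }\{u_h,v_k\}\text{ is an edge of }G)}.
\]
Cohen-Macaulayness of $K[\D]$ gives $h(\D)=h$ of the right-hand algebra, which by Remark \ref{artinian} is the $f$-vector of the flag simplicial complex $\Gamma$ on $[d]$ whose non-edges are exactly $\{h,k\}$ with $\{v_h,v_k\}$ or $\{u_h,v_k\}$ an edge of $G$. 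Hence $h(\D)=f(\Gamma)$ with $\Gamma$ flag on $[d]$.

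The main obstacle is verifying that $\theta_1,\dots,\theta_d$ really is a system of parameters and that the quotient collapses precisely to $K[z_1,\dots,z_d]$ modulo a squarefree-plus-squares ideal — i.e. that the isomorphism above is genuinely well-defined and that the $z_i^2$ and the listed quadrics are the only relations. Without the balancedness hypothesis of Theorem \ref{dimcodimb} one cannot quote \cite[Proposition 4.3]{St} directly; the point is that the combinatorics forced by the perfect matching of right edges (in particular $u_i$ adjacent only to $v_j$'s with $j\ge i$, so that after eliminating $x_i=-y_i$ every monomial relation of $I_\D$ becomes a monomial in the $z$'s or $z_i^2$) makes the reduction behave exactly as in the balanced case. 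Once this bookkeeping is in place the rest is the already-established machinery. Note also that Theorem \ref{dimcodimb} then follows from Theorem \ref{dimcodim} together with the equivalence of Cohen-Macaulayness and vertex decomposability in Theorem \ref{structure}.
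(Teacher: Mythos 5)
Your overall route is sound, but you have overlooked a simplification that the paper exploits: once Theorem \ref{structure} hands you the perfect matching of right edges $\{\{u_1,v_1\},\dots,\{u_d,v_d\}\}$, the complex $\D$ \emph{is} balanced, via $\col(u_i)=\col(v_i)=i$ (exactly as observed in the proof of Proposition \ref{fflaghbvd}). Consequently the hypotheses of Theorem \ref{dimcodimb} are met in full, and the paper's proof of Theorem \ref{dimcodim} is a two-line reduction: $\D$ CM on $[2d]$, $(d-1)$-dimensional, without cone points $\Rightarrow$ vertex decomposable and balanced by Theorem \ref{structure}, and then quote Theorem \ref{dimcodimb}. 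You instead re-derive the Artinian reduction of Theorem \ref{dimcodimb} inline, and in doing so go out of your way to avoid invoking balancedness, asserting that ``$\{u_1,\dots,u_d\}$ independent together with $\tau(G)=d$'' makes $\theta_i=x_i+y_i$ an l.s.o.p. and collapses the quotient correctly, deferring the verification to unwritten ``bookkeeping.'' That verification is the genuine content of Stanley's \cite[Proposition 4.3]{St}, and you do not supply it; as written, this is a gap. The gap is easily closed, but the cleanest closure is exactly the observation you skipped: $\D$ is balanced, so the canonical $\theta_i$'s are an l.s.o.p. and Theorem \ref{dimcodimb} applies verbatim. Finally, your closing remark that Theorem \ref{dimcodimb} ``then follows from Theorem \ref{dimcodim}'' has the logical dependency reversed relative to the paper, where \ref{dimcodimb} is established first (with the stronger hypotheses) and \ref{dimcodim} is deduced from it; the two statements are of course equivalent once Theorem \ref{structure} is in hand, but you should be careful not to present the deduction circularly.
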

\begin{proof}
If $\D$ is a $(d-1)$-dimensional, CM, flag simplicial complex on $[2d]$ without cone points then $\D$ is vertex decomposable and balanced by Theorem \ref{structure}. Thus Theorem \ref{dimcodimb} yields the conclusion.
\end{proof}
Suppose $\D$ is a CM, flag  simplicial complex, without cone points and $G_\D$ is bipartite with partition of the vertex set $A\cup B$. As both $A$ and $B$ are minimal vertex covers, by the purity of $\D$ we have $|A| = |B|$. 
This implies the following corollary of the above theorem.
\begin{corollary} The following inclusion holds true:
\begin{equation}
\!\! \left \{ \! \!
	\begin{array}{c}
	h(\D) \ : \ \hbox{$\D$ CM, flag simplicial complex,} \\
 	\hbox{with $G_\D$  bipartite}
	\end{array}
\right \}
\subseteq
\left \{ 
	\begin{array}{c}
	f(\Gamma) \ : \ \Gamma \hbox{ is a flag }  \\
 	\hbox{simplicial complex}
	\end{array}
\!\!\!\right \} .\nonumber
\end{equation}
\end{corollary}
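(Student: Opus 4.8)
The plan is to massage the hypotheses into the exact form required by Theorem~\ref{dimcodim} and then invoke it. So let $\D$ be a Cohen-Macaulay, flag simplicial complex with $G=G_{\D}$ bipartite. The first step is to get rid of cone points. If $v$ is a cone point of $\D$, then $v$ is an isolated vertex of $G$, and $\D$ is the cone $\langle v\rangle\ast(\D\setminus v)$, so $K[\D]=K[\D\setminus v][x_v]$; hence $\D\setminus v$ is again Cohen-Macaulay and flag, its graph is $G$ with an isolated vertex removed (still bipartite), and $h(\D\setminus v)=h(\D)$. Iterating, I may assume $\D$ has no cone points, equivalently that $G$ has no isolated vertices.

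Next I would pin down the number of vertices. Write the bipartition of $V(G)$ as $A\sqcup B$. Since every edge of $G$ joins $A$ to $B$, both $A$ and $B$ are vertex covers of $G$, and since $G$ has no isolated vertex, each $u\in A$ has a neighbour in $B$; the corresponding edge meets $A$ only in $u$, so $A$ is a \emph{minimal} vertex cover, and likewise $B$. Because $\D$ is Cohen-Macaulay it is pure, say of dimension $d-1$ on $[n]$, and the minimal vertex covers of $G$ are precisely the complements of the facets of $\D$, hence all have cardinality $n-d$. Therefore $|A|=|B|=n-d$, and adding these gives $n=|A|+|B|=2d$.

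At this point $\D$ is a $(d-1)$-dimensional, Cohen-Macaulay, flag simplicial complex on $[2d]$ without cone points, so Theorem~\ref{dimcodim} applies directly and yields a flag simplicial complex $\Gamma$ (on $[d]$) with $f(\Gamma)=h(\D)$, which proves the claimed inclusion. I do not expect any real obstacle here: the only points needing a line of justification are the cone-point reduction (a cone changes neither the $h$-vector nor flagness, Cohen-Macaulayness, or bipartiteness of the graph) and the equality $n=2d$, which is the one place where the bipartite hypothesis is genuinely used; everything else is just repackaging the hypotheses for Theorem~\ref{dimcodim}.
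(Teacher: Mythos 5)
Your proof is correct and follows essentially the same route as the paper: reduce to the case without cone points, use that the bipartition classes $A$ and $B$ are minimal vertex covers and purity forces $|A|=|B|$ so that $n=2d$, and then invoke Theorem~\ref{dimcodim}. You are in fact slightly more careful than the paper's terse remark, spelling out why removing cone points preserves the $h$-vector, Cohen-Macaulayness, flagness, and bipartiteness.
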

We conclude this section with the following remark.
\begin{remark}
If $\D=\D(G)$ is a flag, CM simplicial complex, then $\D(G)$ is pure. In particular any vertex of $G$ belongs to an independent set of cardinality $\dim \D + 1$. Therefore, if $\D$ is a $(d-1)$-dimensional, flag, CM simplicial complex on $[n]$ without cone points, then $n\geq 2d$ by the result of Gitler and Valencia \cite[Theorem 2.1]{GV}. 
\end{remark}
In the spirit of the previous remark, Theorem \ref{dimcodim} can be seen as  the first step towards proving Conjecture \ref{conjalexvabba1}.


\section{Further Results and Examples}
 In this last section we will present two rather technical properties of flag simplicial complexes. We will show that the first property (which we  call \emph{balanced cone-face property} - \eqref{cfp}) implies Cohen-Macaulayness (Proposition \ref{cfCM}) and that the $h$-vector of such a simplicial complex is the $f$-vector of a flag simplicial complex (Lemma \ref{coneface}). For simplicial complexes with the second property \eqref{NP} we will construct a new  complex, with the same $h$-vector, which will satisfy  the hypothesis of Theorem \ref{dimcodimb}. We will also present examples of simplicial complexes with and without these properties.

\begin{lemma}\label{coneface}
Suppose $\D$ is a balanced, flag simplicial complex of dimension $d-1$ and $F_0 = \{a_1,\ldots, a_d\}$  be a facet of $\D$ with the property that: 
\begin{equation}\label{cfp}
\forall ~v \in V(\D), \exists ~1\le i \le d \textup{ such that }(F_0\setminus\{a_i\})\cup\{v\} \textup{ is a facet of } \D.
\end{equation}
Then we have $h(\D) = f(\D\setminus F_0)$.
\end{lemma}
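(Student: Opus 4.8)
The plan is to compute $h(\D)$ through an Artinian reduction of $K[\D]$ by the canonical system of parameters of a balanced complex, and then to identify the resulting algebra with the one attached to $\D\setminus F_0$ by Remark \ref{artinian}; throughout, $\D\setminus F_0$ denotes the face deletion of $F_0$, i.e.\ the subcomplex $\D_W$ of $\D$ induced on $W:=V(\D)\setminus F_0$. First fix a $d$-coloring $\col\colon V(\D)\to[d]$. Since $F_0$ is a facet of a balanced $(d-1)$-dimensional complex it meets every colour class exactly once, so after relabelling we may assume $\col(a_i)=i$ for $i=1,\ldots,d$. The key point is the following consequence of \eqref{cfp}: if $\col(v)=k$, then the index $i$ occurring in \eqref{cfp} must equal $k$, since for $i\neq k$ the $d$-element set $(F_0\setminus\{a_i\})\cup\{v\}$ would use only the $d-1$ colours $[d]\setminus\{i\}$ and hence could not be a face. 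Thus $(F_0\setminus\{a_k\})\cup\{v\}$ is a facet for \emph{every} vertex $v$, and consequently $\{a_j,v\}\in\D$ whenever $j\neq\col(v)$. In other words, no vertex of $F_0$ lies in a bichromatic minimal non-face of $\D$, and the only minimal non-faces meeting $F_0$ are the monochromatic pairs $\{a_j,v\}$ with $v\in W$ and $\col(v)=j$.

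Next I would reduce modulo the canonical system of parameters $\theta_j=\sum_{\col(w)=j}x_w$, $j=1,\ldots,d$, which is a system of parameters for $K[\D]$ by \cite[Proposition 4.3]{St}. Setting $\sigma_j=\sum_{w\in W,\,\col(w)=j}x_w$, the relations $\theta_j=0$ permit the elimination $x_{a_j}\mapsto-\sigma_j$, so that $K[\D]/(\theta_1,\ldots,\theta_d)\cong K[x_w:w\in W]/\bar I$, where $\bar I$ is generated by the images of the quadratic minimal non-faces of $\D$. By the previous paragraph these images are the monomials $x_ux_v$ for minimal non-faces $\{u,v\}$ with $u,v\in W$, together with $-\sigma_jx_v$ for the monochromatic pairs $\{a_j,v\}$ with $v\in W$, $\col(v)=j$; and since each such $-\sigma_jx_v$ equals $-x_v^2$ modulo the monomials $x_wx_v$ (with $w\in W$, $\col(w)=j$, $w\neq v$) that already lie in $\bar I$, we get
\[
\bar I=(x_v^2:v\in W)+(x_ux_v:u,v\in W,\ u\neq v,\ \{u,v\}\notin\D).
\]
Because $\D$ is flag, so is its induced subcomplex $\D_W$, hence the second summand is exactly the Stanley--Reisner ideal $I_{\D_W}$ and $\bar I=J_{\D_W}$ in the notation of Remark \ref{artinian}. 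Finally, property \eqref{cfp} forces $\D$ to be Cohen--Macaulay (this is Proposition \ref{cfCM}, or equivalently one checks directly that $\D$ is vertex decomposable), so $\theta_1,\ldots,\theta_d$ is a regular sequence on $K[\D]$ and therefore
\[
h(\D)=h\bigl(K[\D]/(\theta_1,\ldots,\theta_d)\bigr)=h\bigl(K[x_w:w\in W]/J_{\D_W}\bigr)=f(\D_W)=f(\D\setminus F_0),
\]
the penultimate equality being Remark \ref{artinian}.

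I expect the delicate step to be the identification $\bar I=J_{\D_W}$, because this is exactly where hypothesis \eqref{cfp} is used: it is what guarantees that eliminating the variables $x_{a_j}$ introduces no quadratic relation beyond those coming from $\D_W$ (a single bichromatic minimal non-face through some $a_j$ would already spoil the conclusion). The other point not to be skipped is the appeal to Cohen--Macaulayness: for an arbitrary linear system of parameters one only has $h_i(\D)\le\dim_K\bigl(K[\D]/(\theta_1,\ldots,\theta_d)\bigr)_i$, with equality for all $i$ being equivalent to $K[\D]$ being Cohen--Macaulay.
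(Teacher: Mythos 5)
Your argument is correct algebra but is organized in a way that creates a genuine circularity with the paper's logical structure, and the one step you hand-wave to avoid this circularity is not actually proved. The paper's own proof of Lemma \ref{coneface} is purely combinatorial: it decomposes each $i$-face $F$ of $\D$ as $(F\setminus F_0)\cup(F\cap F_0)$, uses \eqref{cfp} together with flagness to show that any compatible choice of colours in $F_0$ can be adjoined to a face of $\D\setminus F_0$, and derives $f_i(\D)=\sum_j\binom{d-j}{\,i+1-j\,}f'_{j-1}$, from which the identity $h_{j}=f'_{j-1}$ follows by induction via the standard $f$--$h$ relation. Crucially, the paper's proof never invokes Cohen--Macaulayness; the paper even remarks immediately after the lemma that this is by design, because Proposition \ref{cfCM} (Cohen--Macaulayness under \eqref{cfp}) is then \emph{proved from} Lemma \ref{coneface}: one computes $K[\D]/(\theta)$ (exactly as you do), concludes $h\bigl(K[\D]/(\theta)\bigr)=f(\D_W)$, combines with the lemma to get $h\bigl(K[\D]/(\theta)\bigr)=h(K[\D])$, and then applies Stanley's criterion. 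Your proposal inverts this: you cite Proposition \ref{cfCM} to justify the last line $h(\D)=h\bigl(K[\D]/(\theta)\bigr)$, which is circular within the paper's development.

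You do flag the escape hatch---``or equivalently one checks directly that $\D$ is vertex decomposable''---and this \emph{would} make the argument non-circular, since complexes with property \eqref{cfp} are precisely independence complexes of clique-whiskered graphs (the $a_j$ are the whiskers and the colour classes $V_j\setminus\{a_j\}$ form the clique partition), and these are known to be vertex decomposable (Cook--Nagel, cited at the end of the paper). But you assert this rather than prove it, and the paper itself never gives such a direct argument. So as written there is a gap: either supply the vertex-decomposability argument, or replace your final step by the paper's elementary face-counting induction, which sidesteps Cohen--Macaulayness entirely. The parts of your proof that are complete---the observation that \eqref{cfp} forces any minimal nonface through some $a_j$ to be monochromatic, and the elimination $x_{a_j}\mapsto -\sigma_j$ showing $\bar I=J_{\D_W}$---are correct and in fact reproduce the computation the paper uses to prove Proposition \ref{cfCM}, so your write-up is better read as an alternative proof of that proposition (modulo knowing the lemma) than as a self-contained proof of the lemma itself.
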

\begin{proof}
For simplicity, we will denote by $(h_0,\ldots,h_r)$ and $(f_{-1},\ldots,f_{d-1})$ the $h$-vector, respectively
 the $f$-vector, of $\D$. The $f$-vector of  $\D\setminus F_0$ will be denoted by $(f'_{-1},\ldots,f'_s)$.
We will prove the lemma by induction. First of all clearly $h_0 = f'_{-1} = 1$ and $h_1 = f'_0 = n-d$. Suppose that we already have $h_{j} = f'_{j-1}$ for all $j\le i$.

The following observation is the key of the proof. As $\D$ is flag, for any $d\ge i>j$,  if $\{v_1,\ldots,v_{i-j}\}$ and $\{w_1,\ldots,w_j\}$ are two faces of $\D$   such that $\{v_k,w_l\}\in\D$ for any $k$ and $l$, then $\{v_1,\ldots,v_{i-j},w_1,\ldots,w_j\}\in\D$.

Every $i$-dimensional  face $F\in\D$ is  a disjoint union: $(F\setminus F_0) \cup (F\cap F_0)$. We will count the $i$-faces of $\D$ with $|F\setminus F_0| =j$.
As $\D $ is balanced, the number of vertices of $F_0$ that are colored different from all the vertices of $F\setminus F_0$ is exactly $d-j$. Choose an $(i-j)$-face $G \subset F_0$   supported on these vertices. It is easy to notice that, by our hypothesis and the above observation, $G\cup(F\setminus F_0) \in \Delta$. As there are $\binom{d-j}{i+1-j}$ different ways to choose $G$, we get that the number of $i$-faces of $\Delta$ with $|F\setminus F_0| =j$ is 
\[ f'_j\cdot\binom{d-j}{i+1-j}.\]
Decomposing the set of $i$-faces of $\Delta$ according to the cardinality of $F\setminus F_0$, we obtain
\[f_{i} = \sum_{j=0}^{i+1} \binom{d-j}{i+1-j}f'_{j-1}.\]
As the the $f$-vector of $\Delta$ can be computed from the $h$-vector of $\Delta$ by the formula:
\[f_i = \sum_{j=0}^{i+1} \binom{d-j}{i+1-j}h_{j},\]
we obtain by the inductive hypothesis that $h_{i+1} = f'_{i}$. 
\end{proof}
Notice we did not request in  Lemma \ref{coneface} that $\D$ is Cohen-Macaulay. This is because, under the hypothesis of the above lemma, $\D$ is always CM.
\begin{prop}\label{cfCM}
If $\D$ is a simplicial complex with the same properties as in the statement of Lemma \ref{coneface} then $\D$ is Cohen-Macaulay.
\end{prop}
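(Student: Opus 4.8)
The plan is to prove Cohen-Macaulayness via Reisner's criterion, by showing that every link in $\D$ (including $\D$ itself) has vanishing reduced homology in all degrees below the top. Since $\D$ is flag and balanced, the links of faces of $\D$ are again flag, and the cone-face property \eqref{cfp} is a strong purity/connectivity condition that should propagate to links. A cleaner route, however, is to exhibit $\D$ as vertex decomposable, or at least shellable: condition \eqref{cfp} says that $F_0$ is a facet which ``dominates'' every vertex in the sense that each vertex $v$ can be swapped into $F_0$ replacing a suitable $a_i$. This is exactly the kind of hypothesis that lets one run a shedding argument. So the first step I would take is to pick a vertex $v \notin F_0$ (if $\D$ is not already a simplex on $F_0$, i.e. if $n > d$) and argue that $v$ is a shedding vertex: one must check that $\D\setminus v$ and $\lk_\D v$ inherit the hypotheses of Lemma \ref{coneface}, and that no face of $\lk_\D v$ is a facet of $\D\setminus v$ (purity being automatic here since the same $F_0\setminus\{a_i\}\cup\{v\}$ construction shows $\D$ is pure of dimension $d-1$).

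The induction would then be on the number of vertices $n$. In the base case $n = d$, the complex $\D$ is the simplex on $F_0$, which is Cohen-Macaulay. For the inductive step, after verifying that some $v$ is a shedding vertex, I would note that $\lk_\D v$ has dimension $d-2$ and is a balanced flag complex; I need a facet of $\lk_\D v$ playing the role of $F_0$ and satisfying \eqref{cfp} relative to $\lk_\D v$. The natural candidate: since $\D$ is balanced and $v$ has some color $c$, let $a_i$ be the vertex of $F_0$ with $\col(a_i) = c$; then $(F_0 \setminus \{a_i\}) \cup \{v\}$ is a facet of $\D$ containing $v$ (using \eqref{cfp} with this $v$), so $F_0 \setminus \{a_i\}$ is a facet of $\lk_\D v$. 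One then checks \eqref{cfp} for this facet inside $\lk_\D v$ using the flag/balanced combinatorics and the key observation from the proof of Lemma \ref{coneface} (that compatible faces glue). Similarly $\D \setminus v$ is a balanced flag complex on $n-1$ vertices with $F_0$ still a facet satisfying \eqref{cfp}. By induction both $\lk_\D v$ and $\D\setminus v$ are Cohen-Macaulay — or, if one sets up the induction to yield vertex decomposability directly, both are vertex decomposable, hence so is $\D$, hence $\D$ is Cohen-Macaulay.

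The main obstacle I anticipate is the second shedding condition and the propagation of \eqref{cfp} to $\lk_\D v$ and $\D \setminus v$: one must verify carefully that no face of $\lk_\D v$ is a facet of $\D\setminus v$, and that the replacement facet $F_0\setminus\{a_i\}$ in $\lk_\D v$ really satisfies the cone-face property with respect to \emph{every} vertex of $\lk_\D v$ — this requires knowing that vertices of $\D$ of color $c$ other than $v$ are either absent from $\lk_\D v$ or handled by the balancedness bookkeeping. The flag hypothesis is what makes this work: the gluing observation from Lemma \ref{coneface} guarantees that the candidate faces we produce are genuinely faces of the complex. If setting up vertex decomposability proves delicate, a fallback is to bypass it: use Lemma \ref{coneface} together with a count of reduced homology via Hochster's formula, or observe that the explicit facet structure forced by \eqref{cfp} makes $\D$ the ``cone-like'' complex $(\D\setminus F_0) * _{\lk} \langle F_0\rangle$ in a way that lets one reduce to known Cohen-Macaulayness of joins and skeleta. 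But the vertex-decomposability route is the most economical and is what I would attempt first.
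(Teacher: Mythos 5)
Your proposal is correct, but it proceeds along a genuinely different route from the paper. The paper's proof is algebraic and leans directly on Lemma~\ref{coneface}: since $\D$ is balanced, the canonical linear system of parameters $\theta_i=\sum_{\col(j)=i}x_j$ exists; reducing $K[\D]$ modulo $(\theta_1,\ldots,\theta_d)$ and using the observation that \eqref{cfp} forces $\col(a_i)=\col(v)$ whenever $x_{a_i}x_v\in\Gens(I_\D)$, one gets an explicit monomial Artinian quotient isomorphic (after a change of variables) to $K[x_i:i\in W]/J_{\D_W}$ with $W=[n]\setminus F_0$; hence $h\bigl(K[\D]/(\theta)\bigr)=f(\D_W)=f(\D\setminus F_0)$, which by Lemma~\ref{coneface} equals $h(\D)$, and then \cite[Lemma~2.6]{St} gives Cohen--Macaulayness. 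Your argument instead shows the stronger statement that $\D$ is vertex decomposable, by induction on the number of vertices: any $v\notin F_0$ is a shedding vertex, $\D\setminus v$ inherits \eqref{cfp} with the same $F_0$, and $\lk_\D v$ inherits \eqref{cfp} with the facet $F_0\setminus\{a_i\}$ where $\col(a_i)=\col(v)$; the shedding condition follows from purity, which \eqref{cfp} plus flagness and balancedness force. The verifications you flag as the main risk (propagation of \eqref{cfp} to the link and deletion, and the second shedding condition) do in fact go through by the same gluing argument used in the proof of Lemma~\ref{coneface} together with the observation that, by balancedness, the $a_i$ to be swapped out for $v$ always has $\col(a_i)=\col(v)$. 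Your approach buys more (vertex decomposability, hence shellability and CM over every field), is independent of Lemma~\ref{coneface}, and is in fact the route taken by Cook~II and Nagel \cite{CN} for clique-whiskered graphs, which the paper notes at the end of Section~6 are exactly the complexes satisfying \eqref{cfp}; the paper's route is shorter once Lemma~\ref{coneface} is already on the table and produces as a byproduct the explicit Artinian reduction that is also useful elsewhere.
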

 
\begin{proof}
As we have seen in the preliminaries section, a balanced simplicial complex has a canonical linear  system of parameters, namely  $\{\theta_i = \sum_{\col(j) = i} x_j ~:~ i=1,\ldots ,  d\}.$ It is easy to see that the property \eqref{cfp} is equivalent to 
\[ x_{a_i}x_{v} \in \Gens(I_\D) \Rightarrow \col(a_i)=\col(v),\quad \forall ~i=1,\ldots ,  d.\]
Notice also that if $V_i $ is the set of vertices of color $i$, then $x_vx_w \in \Gens(I_\D)$ for  any $ v,w \in V_i$ and $\forall~ i = 1,\ldots,d$.
If we denote by $W = [n]\setminus F_0$, considering the above observation, it is not difficult to see that 
\[ \frac{K[\D] }{(\theta_1,\ldots,\theta_d)} \simeq \frac{K[x_i~:~ i\in W]}{( x_i^2, x_ix_j ~:~ \{i,j\} \textrm{~ minimal nonface of~}\D_W)}.\] 
The isomorphism is obtained by sending $x_i\mapsto x_i$ if $i\notin F_0$ and 
$$x_i \mapsto -\sum_{\col(j)=\col(i)}x_j\quad  \textrm{if~~} i \in F_0.$$
By Remark \ref{artinian} we obtain that 
\[h\left(\frac{K[\D]}{(\theta_1,\ldots,\theta_d)}\right) = f(\D_W).\]
As $\D_W = \D\setminus F_0$, by Lemma \ref{coneface} we also have  that 
\[h\left(\frac{K[\D]}{(\theta_1,\ldots,\theta_d)}\right) = h(K[\D]),\]
which by \cite[Lemma 2.6]{St} implies that $\D$ is Cohen-Macaulay.
\end{proof}

Let us present now an example of a simplicial complex satisfying \eqref{cfp}. First let us establish a graphical convention.
Throughout  this section,   the thicker vertical lines in the  pictures of graphs  represent the fact that the subgraphs induced by the vertices in one column  are complete (e.g. in the next figure, the subgraphs induced by each of the vertex sets $\{1,4,7\}$, $\{2,5,8\}$ and $\{3,6,9\}$ are complete).

\begin{example} The independence complex $\D$ of the graph on the left is an example of simplicial complex satisfying the hypothesis of Lemma \ref{coneface}. It is easy to see that $F_0 = \{1,2,3\}$ satisfies property \eqref{cfp}. One can check that $\D$ is pure, of dimension 2 and that $h(\D) = (1,6,5)$.

\includegraphics[scale=0.7]{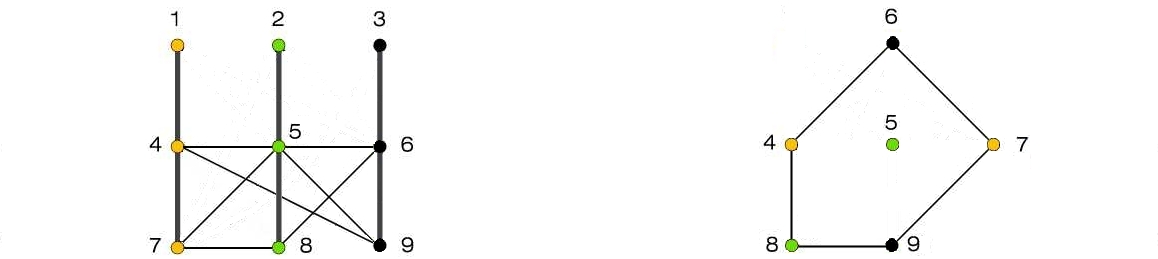}\\
On the right hand side you can see a picture of the 1-dimensional simplicial complex $\D \setminus F_0$. One can notice that $\D \setminus F_0$ is no longer pure, nor balanced. The only property inherited from $\D$, apart from flagness, is the 3-colorability. 
\end{example}

In the remaining part of this section we will show that under certain conditions, a flag, balanced, CM simplicial complex may be \lq\lq modified\rq\rq~ such that it satisfies the hypothesis of Theorem \ref{dimcodimb}.
Let $\D$ be a CM, flag balanced  ($d-1$)-dimensional simplicial complex on $[n]$. As we have seen,  if $n = 2d$ and $\D$ has no cone points, we know that there exists a flag simplicial complex $\G$ such that $h(\D) = f(\G)$. Suppose now that $n> 2d$. Adding $n-2d$ cone points to $\D$ we still obtain a CM, flag and balanced simplicial complex, and the dimension of this new complex is equal to half the number of vertices. 

In order to simplify notation, suppose that  $\D$ is already a ($d-1$)-dimensional, CM, flag, balanced   simplicial complex on $[2d]$, with $r$ cone points $z_1,\ldots,z_r$. Let 
$[2d] = \cup_{i=1}^d V_i$
be the partition of the vertices corresponding to the coloring. Without loss of generality we may also assume  that $ V_{d+1 -j}=\{z_j\}$ for $j = 1,\ldots, r$. We will denote by $G = G_\D$ the graph of minimal nonfaces of $\D$. Suppose that $\D$ has  the  property that in $G$ for every $i \in 1,\ldots ,  d$ with $|V_i| >2$ we have
\begin{equation}\label{NP}
\exists ~y_{i,1}, y_{i,2} \in V_i \textup{~such that~} \forall~ x \in V_i \textup{~we have} ~N[y_{i,1}]\subseteq N[x] \textup{~or~}N[y_{i,2}]\subseteq N[x].
\end{equation}
Denote by $\overline{V_i} = V_i \setminus \{ y_{i,1},y_{i,2}\}$ ~
and  by $\overline{V} = \cup \overline{V_i}$ the union over all $i = 1,\ldots , (d-r)$ with $|V_i| > 2$. Notice that the cardinality of $\overline{V}$ satisfies $|\overline{V}| = r$, where $r$ is  the number of cone points. For any $x\in \overline{V}$ denote by $y_{x}$ the element of property \eqref{NP}. If for both $y_{i,1}$ and $y_{i,2}$ the inclusion of the closed neighborhoods is satisfied, then randomly choose one of them as $y_{x}$.
We will denote by $\Gens(I_\D)$ the set of minimal generators of the Stanley-Reisner ideal of $\D$. If no confusion may arise, we will denote the variables with the same letters as the vertices of $\D$. With the above notation we have:
\begin{lemma}
The flag simplicial complex $\widetilde{\D}$ corresponding to the square-free monomial ideal generated by
\begin{equation}\label{newdelta}
\left( \Gens(I_\D) \setminus \left( ~\bigcup_{x\in \overline{V}} \{xy_{x}\}\right)~\right) \cup \left(~\bigcup_{x\in \overline{V}}\{xz_j\}~\right) 
\end{equation}
is  balanced, Cohen-Macaulay and has the same $f$-vector as $\D$.
\end{lemma}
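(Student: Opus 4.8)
The plan is to verify separately the three properties asserted for $\widetilde{\D}$: flagness is immediate from the definition \eqref{newdelta}, so what remains is to exhibit a $d$-colouring making $\widetilde{\D}$ balanced, to prove $f(\widetilde{\D})=f(\D)$, and to prove Cohen--Macaulayness. Throughout write $G=G_{\D}$ and $\widetilde{G}=G_{\widetilde{\D}}$; since every $z_j$ is a cone point of $\D$, it is an isolated vertex of $G$, so $\widetilde{G}$ is obtained from $G$ by deleting the $r$ edges $\{x,y_x\}$, $x\in\overline{V}$, and inserting the $r$ edges $\{x,z_{j(x)}\}$ along a fixed bijection $x\mapsto z_{j(x)}$ between $\overline{V}$ and $\{z_1,\dots,z_r\}$. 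Using this bijection I would define $\widetilde{\col}$ to agree with $\col$ off $\overline{V}$ and to send $x\in\overline{V}$ to $\col(z_{j(x)})$; a short case analysis --- a $\widetilde{\col}$-monochromatic pair is either a $\col$-monochromatic pair distinct from all deleted edges $\{x,y_x\}$, or one of the inserted edges $\{x,z_{j(x)}\}$, or is impossible because the $z_j$ lie in distinct singleton colour classes of $\D$ --- shows that $\widetilde{\col}$ is a proper colouring of the flag complex $\widetilde{\D}$.

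To prove $f(\widetilde{\D})=f(\D)$ I would induct on the number of modified edges, performing the swaps $\{x_k,y_{x_k}\}\rightsquigarrow\{x_k,z_k\}$ one at a time; the base case is trivial. Consider a single swap, turning $\D$ into $\D'$, and set $G^{\circ}=G-z_1$. Since $z_1$ is isolated in $G$, both $\D\setminus z_1$ and $\link_{\D}z_1$ equal $\D(G^{\circ})$, so $f_i(\D)=f_i(\D(G^{\circ}))+f_{i-1}(\D(G^{\circ}))$. In $G_{\D'}$ the vertex $z_1$ has degree one (only neighbour $x_1$), so the analogous decomposition gives $f_i(\D')=f_i(\D(H))+f_{i-1}\bigl(\D(G^{\circ})\setminus x_1\bigr)$, where $H$ is $G^{\circ}$ with the edge $\{x_1,y_{x_1}\}$ removed. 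The inclusion $N_G[y_{x_1}]\subseteq N_G[x_1]$ is exactly what identifies the faces of $\D(H)$ not already in $\D(G^{\circ})$ with $\{\,\{x_1,y_{x_1}\}\cup T : T\in\link_{\D(G^{\circ})}x_1\,\}$, giving $f_i(\D(H))=f_i(\D(G^{\circ}))+f_{i-2}(\link_{\D(G^{\circ})}x_1)$; combining this with the standard decomposition $f_{i-1}(\D(G^{\circ}))=f_{i-1}(\D(G^{\circ})\setminus x_1)+f_{i-2}(\link_{\D(G^{\circ})}x_1)$ yields $f_i(\D')=f_i(\D)$ for all $i$. One then checks that $\D'$ is again a flag, balanced (move $x_1$ to the colour of $z_1$), $(d-1)$-dimensional complex on $[2d]$ with cone points exactly $z_2,\dots,z_r$, satisfying \eqref{NP} with the same distinguished vertices $y_{i,1},y_{i,2}$ --- the relevant closed neighbourhoods being unchanged outside $\{x_1,y_{x_1},z_1\}$ --- and that performing the remaining $r-1$ swaps on $\D'$ reproduces $\widetilde{\D}$. (Crucially, Cohen--Macaulayness is not needed for this recursion.) The induction hypothesis then gives $f(\widetilde{\D})=f(\D)$; in particular $\dim\widetilde{\D}=d-1$, so by the colouring above $\widetilde{\D}$ is balanced, and $h(\widetilde{\D})=h(\D)$ by \eqref{hvect}.

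For Cohen--Macaulayness I would compare Artinian reductions. Let $\theta_i=\sum_{v\in V_i}x_v$ and $\widetilde{\theta}_i=\sum_{v\in\widetilde{V}_i}x_v$ be the canonical linear systems of parameters of the balanced complexes $\D,\widetilde{\D}$ (\cite[Proposition 4.3]{St}), where $\widetilde{V}_i=\widetilde{\col}^{-1}(i)$, so $\widetilde{V}_i=\{y_{i,1},y_{i,2}\}$ when $|V_i|>2$ and $\widetilde{V}_{d+1-j}=\{z_j,x_j\}$. Consider the graded automorphism $\phi$ of $K[x_v:v\in[2d]]$ fixing every variable except $\phi(x_{z_j})=x_{z_j}+x_{x_j}$ and $\phi(x_y)=x_y-\sum_{x\in\overline{V},\,y_x=y}x_x$ for each vertex $y$ occurring as some $y_x$. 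A direct computation gives $\phi(\theta_i)=\widetilde{\theta}_i$ for every $i$, and --- using repeatedly that $N_G[y_x]\subseteq N_G[x]$, that each $z_j$ is isolated in $G$, and that monochromatic pairs are edges --- that $\phi$ carries every generator $x_vx_w$ of $I_{\D}$ into $I_{\widetilde{\D}}+(\widetilde{\theta}_1,\dots,\widetilde{\theta}_d)$; for instance $\phi(x_{x_k}x_{y_{x_k}})$ vanishes modulo this ideal because $x_{y_{x_k}}\equiv-x_{y'}$ there, $y'$ being the sibling of $y_{x_k}$ in its colour class, and $\{x_k,y'\}\in\widetilde{G}$. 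Hence $\phi$ induces a graded surjection $K[\D]/(\theta_1,\dots,\theta_d)\twoheadrightarrow K[\widetilde{\D}]/(\widetilde{\theta}_1,\dots,\widetilde{\theta}_d)$. Since $\D$ is Cohen--Macaulay, $\theta_1,\dots,\theta_d$ is a regular sequence and $\dim_K K[\D]/(\theta)=\sum_i h_i(\D)$; therefore $\dim_K K[\widetilde{\D}]/(\widetilde{\theta})\le\sum_i h_i(\D)=\sum_i h_i(\widetilde{\D})$. On the other hand $\dim_K\bigl(K[\widetilde{\D}]/(\widetilde{\theta})\bigr)_i\ge h_i(\widetilde{\D})$ for every $i$ by \cite[Lemma 2.6]{St}, so equality holds in each degree, and the same lemma gives that $\widetilde{\D}$ is Cohen--Macaulay.

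I expect the main obstacle to be the bookkeeping in the middle step: making the link--deletion identification precise and, above all, verifying that a single edge swap preserves \emph{all} of the structure needed to iterate --- balancedness, the exact list of remaining cone points, and property \eqref{NP} --- so that the induction on $r$ closes. The case analysis for $\phi(I_{\D})\subseteq I_{\widetilde{\D}}+(\widetilde{\theta})$ in the last step is longer but routine once the correct change of variables $\phi$ has been found; its crucial input, exactly as in the $f$-vector computation, is the neighbourhood domination $N_G[y_x]\subseteq N_G[x]$ built into \eqref{NP}.
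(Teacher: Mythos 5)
Your proof is correct, but your Cohen--Macaulayness argument takes a genuinely different route from the paper's. For the $f$-vector, both you and the paper reduce to a single swap, but the paper's bookkeeping is cleaner: it observes directly that the added faces $\widetilde{\D}\setminus\D$ and the removed faces $\D\setminus\widetilde{\D}$ are \emph{both} in bijection with $L_{xz}=\lk_\D\{x,z\}$ (via $F\mapsto F\cup\{x,y\}$ and $F\mapsto F\cup\{x,z\}$, using $N[y]\subseteq N[x]$ to see the former are faces), so $f_i(\widetilde{\D})-f_i(\D)=f_{i-2}(L_{xz})-f_{i-2}(L_{xz})=0$ at a glance; your version through two link--deletion decompositions reaches the same conclusion by a longer path. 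For Cohen--Macaulayness the divergence is real: the paper argues combinatorially, proving that $\widetilde{\D}$ is pure and that every two-color restriction $\widetilde{\D}_{\widetilde{V}_i\cup\widetilde{V}_j}$ is connected and $1$-dimensional, and then invokes condition (1) of Theorem \ref{structure}; you instead build an explicit graded linear automorphism $\phi$ with $\phi(\theta_i)=\widetilde\theta_i$ and $\phi(I_\D)\subseteq I_{\widetilde\D}+(\widetilde\theta)$, get a surjection of Artinian reductions, and close with Stanley's Lemma~2.6 --- which is precisely the mechanism the paper uses for Proposition \ref{cfCM}, so you have in effect found a uniform algebraic argument covering both statements. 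The tradeoff: the paper's route leans on Theorem \ref{structure} (hence on the special $[2d]$/no-cone-point setting it has already established) and requires separate purity and connectivity checks; your route requires the case analysis for $\phi(I_\D)\subseteq I_{\widetilde\D}+(\widetilde\theta)$, which you only sketch --- note in particular that $\phi(x_{x_k}x_{y_{x_k}})$ contains not just $x_{x_k}x_{y_{x_k}}$ but also $-x_{x_k}^2$ and $-\sum_{x'\neq x_k,\,y_{x'}=y_{x_k}}x_{x_k}x_{x'}$; the latter monomials lie in $I_{\widetilde\D}$ because monochromatic pairs inside $\overline V$ survive as edges of $\widetilde G$, while $x_{x_k}^2$ is absorbed via $x_{x_k}\equiv -x_{z_{j(x_k)}}$ modulo $\widetilde\theta$. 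Once those cases are written out (the neighbourhood domination $N[y_x]\subseteq N[x]$ handles the rest), your argument is sound.
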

\begin{proof}
It is easy to see that it will be enough to  prove the lemma for $r=1$. We call a \lq\lq step\rq\rq~ the deletion of $xy_x$ from $\Gens(I_\D)$ together with the adding of $xz_j$ to $\Gens(I_\D)$ for some $x \in \overline{V}$. Notice that after \lq\lq taking a step\rq\rq~ property \eqref{NP} still holds in the new complex. To prove the lemma we have to show that at each step the $f$-vector does not change and that properties 1. and 2. below hold. It is clear that each step reduces $r$, the number of cone points,  by one. We will not need to prove Cohen-Macaulayness at each step, as it follows from properties 1. and 2. when there are no more cone points.

Suppose $r=1$ and that $i$ is the color for which  $|V_i|>2$, let  $x,y \in V_i$  be two vertices with $N[y] \subseteq N[x]$ and let $z$ be a cone point.

We will first prove that $f(\widetilde{\D})= f(\D)$. As $z$ is a cone point for $\D$, it will also be a cone point for the simplicial complex $\lk_\D x$. We will denote by $L_{xz} = \lk_\D\{x,z\}$.
By  definition $V(L_{xz})\cap N[x] = \emptyset$ , so property \eqref{NP} implies that $V(L_{xz}) \cap N[y] = \emptyset$ as well. 
This ensures that deleting the generator $xy$ we  obtain the new faces  $\widetilde{\D} \setminus \D = \{ F \cup \{x,y\} ~:~ F \in L_{xz}\}$. On the other hand, adding $xz$ as a generator we delete exactly the faces $ \{F \cup \{x,z\} ~:~ F\in L_{xz}\}$. This means we have for every $i \in \{-1,\ldots,d-1\}$:
\[ f_i(\widetilde{\D}) = f_i(\D) -f_{i-2}(L_{xz})+ f_{i-2}(L_{xz}),\]
where $f_j = 0 $ for $j<-1$.

Notice that $ \widetilde{\D}$ is still balanced. The only vertex that  changes in color is $x$, which will be colored with the same color as $z$. We will write $\cup_{i=1}^d \widetilde{V}_i$ for the partition of the vertices induced by the coloring.   In order to prove that $\widetilde{\D}$ remains CM we will prove that
\begin{compactenum}
\item $\widetilde{\D}$ is pure.
\item $\widetilde{\D}_S$ is a connected,  1-dimensional  complex for any subset of vertices $S = \widetilde{V}_i \cup \widetilde{V}_j$ with $1 \le i<j\le d$.
\end{compactenum}
Notice that (also for $r>1$) $\widetilde{\D}$ is a $(d-1)$-dimensional simplicial complex on $[2d]$, without cone points. It is easy to check that conditions 1. and 2. above imply the first point of Theorem \ref{structure} and thus  imply Cohen-Macaulayness. 

To prove 1.  we only have to check that the facets of the form $\{x,y\}\cup F$ with $F \in L_{xz}$ are of dimension $d-1$. But the maximal faces under inclusion in $L_{xz}$ are all of cardinality $d-2$ by the purity of $\D$, so $\widetilde{\D}$ is also pure.

To prove 2. we have to check three  cases. Fix $S = \widetilde{V}_i \cup \widetilde{V}_j$ with $1\le i <  j \le d$. \\
\mbox{\emph{Case 1.}~$S\cap \{x,y,z\} = \emptyset.$} In this case $\widetilde{\D}_S = \D_S$, so by  \cite[Theorem 4.5]{St} it is CM, thus connected.\\
\mbox{\emph{Case 2.}~$S\cap \{x,y,z\} = \{y\}$.} The inclusion $N[y] \subseteq N[x]$ is equivalent to $$\{v,x\} \in \D \Rightarrow \{v,y\} \in \D.$$ Let $v,w$ be two vertices in $\widetilde{\D}_S$. Again by \cite[Theorem 4.5]{St} in $\D_{S\cup\{x\}}$ there exists a path connecting them: $v=v_1,v_2,\ldots,v_t =w$. Suppose $v_j = x$ for some $j$. By the above observation $\{v_{j-1},y\}$ and $\{y,v_{j+1}\}$ are edges in $\widetilde{\D}_S$, so we can modify the path to $v_1, \ldots, v_{j-1},y,v_{j+1},\ldots,v_t$. Hence $\widetilde{\D}_S$ is also connected. \\
\mbox{\emph{Case 3.}~$S\cap \{x,y,z\} \supseteq \{x,z\}$.} Suppose $z \in \widetilde{V}_j$. As $z$ is a cone point in $\D$, it is connected to all vertices of $\widetilde{V}_i$. If $y\in \widetilde{V}_i$ then it is enough to notice that $\{x,y\} \in \widetilde{\D}_S$. Otherwise, as $\widetilde{\D}$ is pure and balanced, there exists at least one vertex $v\in \widetilde{V}_i$ such that $\{x,v\} $ is an edge.
\end{proof}

Using the above lemma together with Theorem \ref{dimcodimb} we obtain the following corollary.
\begin{corollary}
If $\D$ is a Cohen-Macaulay, flag, balanced simplicial complex satisfying property \eqref{NP}, there exists a flag simplicial complex $\G$ such that $h(\D) = f(\G)$.
\end{corollary}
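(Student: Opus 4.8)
The plan is to reduce, without changing the $h$-vector, to a complex of the form treated in the Lemma, and then to invoke Theorem \ref{dimcodimb}. Write $\D$ for a $(d-1)$-dimensional, Cohen-Macaulay, flag, balanced complex on $[n]$ satisfying \eqref{NP}. The first thing I would record is a cone-point normalization: adjoining to, or removing from, such a complex a cone point preserves flagness, balancedness and Cohen-Macaulayness, keeps property \eqref{NP} in force (it constrains only the colour classes of cardinality greater than two, while a cone point always sits alone in its colour class), and leaves the $h$-polynomial unchanged, because coning multiplies the Hilbert series by $1/(1-t)$ and raises the Krull dimension by one. Using this, I would first delete all the cone points of $\D$ to get a cone-point-free complex $\D_0$ with $h(\D_0)=h(\D)$; by the bound of Gitler and Valencia recalled above, $\D_0$ has at least $2(\dim\D_0+1)$ vertices, so adjoining the right number of cone points produces a complex $\D^\ast$ that is $(d'-1)$-dimensional on $[2d']$ for a suitable $d'$ (one computes $d'=n-d$), still Cohen-Macaulay, flag and balanced, still satisfies \eqref{NP}, and has $h(\D^\ast)=h(\D)$.

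Next I would apply the Lemma to $\D^\ast$. It returns a flag complex $\widetilde{\D^\ast}$ which, by the Lemma and its proof, is again $(d'-1)$-dimensional on $[2d']$, is balanced and Cohen-Macaulay, this time has \emph{no} cone points at all, and satisfies $f(\widetilde{\D^\ast})=f(\D^\ast)$. Since $\widetilde{\D^\ast}$ and $\D^\ast$ have the same dimension, formula \eqref{hvect} promotes this to $h(\widetilde{\D^\ast})=h(\D^\ast)=h(\D)$. (If $\D^\ast$ already has no cone points, the Lemma is vacuous and one simply takes $\widetilde{\D^\ast}=\D^\ast$.)

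Finally, $\widetilde{\D^\ast}$ is a $(d'-1)$-dimensional, Cohen-Macaulay, flag complex on $[2d']$ without cone points, hence balanced and vertex decomposable by Theorem \ref{structure}; Theorem \ref{dimcodimb} then yields a flag simplicial complex $\G$ on $[d']$ with $f(\G)=h(\widetilde{\D^\ast})=h(\D)$, which is exactly the assertion. I do not expect a real obstacle: all the combinatorial substance is carried by the Lemma, and the only point that needs genuine care is the cone-point bookkeeping of the first paragraph -- chiefly, that coning up or down alters neither the $h$-vector nor property \eqref{NP}, and that this normalization can always be steered to a complex of dimension $d'-1$ on a vertex set of size $2d'$.
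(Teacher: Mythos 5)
Your proof is correct and follows the same route the paper has in mind: normalize $\D$ by adding or removing cone points until it sits on $[2d']$ with $d'=n-d$ (which preserves flagness, balancedness, Cohen--Macaulayness, property \eqref{NP}, and the $h$-vector), then apply the lemma to obtain a cone-point-free complex $\widetilde{\D^\ast}$ with the same $f$-vector and hence the same $h$-vector, and finally invoke Theorem \ref{dimcodimb} (via Theorem \ref{structure}) to produce the flag complex $\Gamma$. You spell out the cone-point bookkeeping more carefully than the paper, which states the corollary as an immediate consequence of the lemma and Theorem \ref{dimcodimb} after having set up the normalization informally in the preceding paragraph; in particular your remarks that coning multiplies the Hilbert series by $1/(1-t)$ and that a cone point is always alone in its colour class (so \eqref{NP} is unaffected) are exactly the points one needs to check.
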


In the next example we will see how Lemma \ref{newdelta} works.
\begin{example}
 Let $\D'$ be the flag, balanced simplicial complex corresponding to the graph on $\{1,\ldots,8\}$ represented on the left hand side.   Consider $\D$ to be the independence complex of the whole graph $G$ on $\{1,\ldots,10\}$. Notice that $\D$ is obtained from $\D'$ by adding the cone points $9$ and $10$. It is not difficult to check that $\D$ is CM, (actually  vertex decomposable). \\
\includegraphics[scale=0.7]{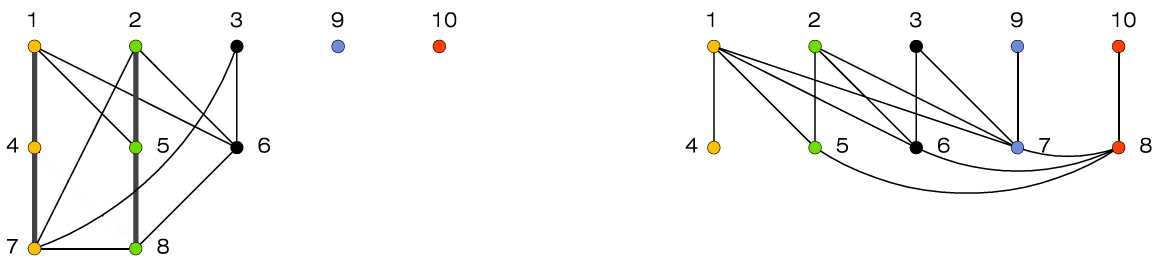}\\
Now we construct the simplicial complex $\widetilde{\D}$ as the independence complex of the graph $\widetilde{G}$ depicted on the right hand side. If we set $V_1 = \{1,4,7\}$ and $V_2=\{2,5,8\}$, using the notation of Lemma \ref{newdelta} we have $\overline{V} = \overline{V_1}\cup \overline{V_2} = \{7\} \cup \{8\}$. As $  V_1=N[4] \subseteq N[7] = V_1 \cup \{2,3,8\}$ and $V_2 \cup \{6,7\}=N[2]  \subseteq N[8] = V_2 \cup \{6,7\}$ we may choose $y_7 = 4$ and $y_8 =2$. Deleting the edges $\{4,7\}$ and $\{2,8\}$ and adding the edges $\{7,9\}$ and $\{8,10\}$ we find ourselves in the hypothesis of Lemma \ref{newdelta}, so $\widetilde{\D}$ is flag, balanced, CM and $f(\widetilde{\D}) = f(\D)$.
\end{example}

Unfortunately, property \eqref{NP} is not satisfied in general. The following simplicial complex turned up in several contexts as a counter-example to the strategy we were trying to use in order to prove Conjecture \ref{conjalexvabba}.
\begin{example}
Let $\D$ be the 2-dimensional simplicial complex on $[8]$ represented below on the left hand side. The picture on the right hand side represents the   graph  $G=G_\D$ of minimal nonfaces. \\

$\quad\quad\quad$\includegraphics[scale=0.8]{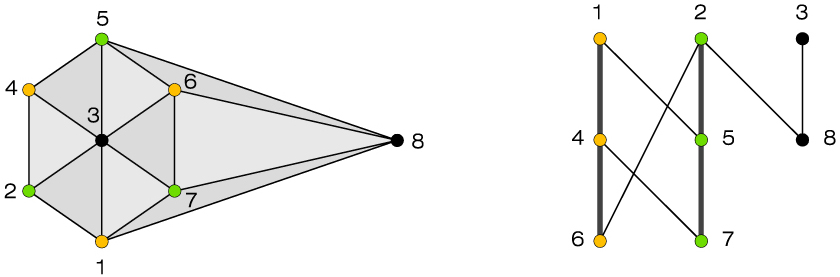}\\

Notice that $\D$ is balanced and it is also easy to check that it is vertex decomposable. One vertex decomposition is obtained by removing in order the vertices  $8,~ 7,~6,~5,~4$. 
Let $V_1 = \{1,4,6\},~ V_2=\{2,5,7\}$ and $V_3 = \{3,8\}$  the disjoint sets of vertices of the same color. Notice that these sets are uniquely determined, i.e. there is a unique 3-coloring modulo a permutation of the colors.   From $G$ we can easily read that  $N[1] = V_1 \cup \{5\}$, $N[4] = V_1 \cup \{7\}$  and $N[6] = V_1 \cup \{2\}$, so $\D$ does not satisfy property \eqref{NP}. It is also easy to check that $\D$ does not satisfy the conditions of Lemma \ref{coneface}.   However, $h(\D)= (1,5,3)$ is clearly the $f$-vector of a flag simplicial complex.

We would also like to notice that $\lk_\D 8$ is vertex decomposable, but its vertex decomposition cannot be induced by the vertex decomposition of $\D$, because $7$ is not a shedding vertex  for $\lk_\D 8$.
Notice that  for $\D\setminus 8$ both the lexicographic and the reversed lexicographic order on $\mathcal{F}(\D\setminus 8)$ are shelling orders. However, this is no longer true for $ (\D\setminus 8)_{\{1,7,6,5\}}$. 

The above observations also underline the fact  that even if vertex decomposability strongly encourages proofs by induction, in the case of Conjecture \ref{conjalexvabba}  this strategy works only in the presence of extra assumptions or leads to weaker conclusions.
\end{example}

The flag, balanced, pure simplicial complexes  with having property \eqref{cfp} are exactly the independence complexes of the clique-whiskered graphs introduced by Cook II and Nagel  in \cite{CN}. Both Lemma \ref{coneface} and Proposition \ref{cfCM} have a correspondent in the above mentioned paper.\\

%
%
%

\end{document}